\documentclass[11pt]{amsart}
\usepackage[english]{babel}
\usepackage{amssymb,amsmath,xcolor,mathrsfs,mathabx}
\usepackage{hyperref,color}
\usepackage{graphicx}
\usepackage{wrapfig}
\usepackage[utf8]{inputenc}
\usepackage{float}
\usepackage{wrapfig}
\usepackage[all]{xy}
\usepackage{fancyhdr}
 
\numberwithin{equation}{section}
 
\newtheorem{theorem}{Theorem}[section]

\newtheorem{proposition}[theorem]{Proposition}

\newcommand{\Ric}{{\rm{Ric}}}

\newcommand{\Hess}{{\rm{Hess\,}}}

\title[Stekloff eigenvalues problems]{The first Stekloff eigenvalue in weighted Riemannian manifolds}

\author{M. Batista, J. I. Santos}
\address{IM, Universidade Fe\-deral de Alagoas, Macei\'o, 
AL, CEP 57072-970, Brazil}
\email{mhbs@mat.ufal.br \mbox{and} jissivan@gmail.com}

\subjclass[2010]{58C40; 58J50}
\keywords{Eigenvalue estimates, Weighted area, Surfaces} 
\thanks{The first author was supported by CNPq/Brazil. }

\begin{document} 

\begin{abstract}
In study of eigenvalue problems, a classical problem is the Stekloff eigenvalue problem. There are many estimates of the first non- zero Stekloff eigenvalue, including a sharp estimate on surfaces, obtained by Escobar in \cite{escobar}. In this paper we are interested to study this problem in weighted context. Various estimates are obtained, including a sharp estimate on surfaces, similar to that Escobar obtained.
\end{abstract}
\maketitle
\section{introduction}
The Classical Stekloff  problem in its original form is the eigenvalue problem 
\begin{align*}
& \begin{cases} \Delta u=0\,\,\,\,&\text{ in } \Omega ,\\
 \frac{\partial u}{\partial\nu}=\sigma u\,\,\,\,&\text{ on } \partial\Omega,
  \end{cases}
  \end{align*}
was introduced by him in \cite{stekloff} for bounded domains $\Omega$ of the plane and afterward this was studied by Payne in \cite{payne} for bounded domains in the plane with non-negative curvature. This problem has a physical interesting because the eigenfunctions represents the steady state temperature on a domain and the flux on the boundary is proportional to the temperature, see  \cite{stekloff} for more details. Thenceforth many authors have studied about  this subject and  they had many advances on the understanding of this topic, see for instance \cite{stekloff, weinstock, payne, escobar, escobar2, escobar 3, kuttler, lima, raulot, xia, wang, binoy, GP} and references therein. More specifically, many authors studied ways to estimate or determine exactly the eigenvalues associated with the Stekloff problem and modifications of the latter, see \cite{escobar, xia, wang}.

Another interesting object in Differential Geometry are the Riemannian manifolds endowed with a smooth positive density function, this is directly related to Ricci flow, mean curvature flow, theory of optimal transportation, see \cite{morgan, espinar} for a good overview of this subject.  Aiming study the various Stekloff's problems in the weighted context, we will introduce the necessary concepts. We recall that a {weighted Riemannian manifold} is a  Riemannian manifold $({M},g)$ 
endowed with a real-valued smooth function $f: M \to \mathbb{R}$ 
which is used as a density to measure geometric objects on $M$. 
Associated to this structure we have an important second order differential operator defined by
$$\Delta_f u= \Delta u - \langle\nabla u, \nabla f\rangle,$$
where $u \in C^\infty$. This operator is known as Drift Laplacian.

Also, following  Lichnerowich \cite{Lichnerowich} and Bakry and \'Emery  \cite{bakry}, 
the natural generalizations of Ricci curvatures  are defined as 
\begin{equation}\label{BE}
\Ric_f=\Ric + \Hess f
\end{equation}
and
\begin{equation}\label{mBE}
\Ric_f^k=\Ric_f-\frac{df\otimes df}{k-n-1},
\end{equation}
where $k>n+1$ or $k=n+1$ and $f$ a constant function.

\medskip

In this paper we will consider $M^{n+1}$ a compact oriented Riemannian manifold with boundary $\partial M$. Let $i: \partial M \hookrightarrow M$ be the standard inclusion and $\nu$ the outward unit normal on $\partial M$. We will denote by $II$ its {second fundamental form} associate to $\nu$, $\langle \nabla_X \nu, Y\rangle = II(X,Y),$
and by $H$ the {mean curvature} of $\partial M$, that is, the trace of $II$ over $n$. 

We recall that the {weighted mean
curvature}, introduced by Gromov in \cite{g}, of the inclusion $i$ is given by 
$$H_{f}=H-\dfrac{1}{n}\langle \nu,\nabla f\rangle.$$

%
 Finally, consider the following  three kinds of the weighted Stekloff problems:
\begin{align}\label{steklov}
& \begin{cases} \Delta_fu=0\,\,\,\,&\text{ in } M ,\\
 \frac{\partial u}{\partial\nu}=p u\,\,\,\,&\text{ on } \partial M;
  \end{cases}
  \intertext{}
  &  \begin{cases}\label{2}
  \Delta_f^2u=0\,\,\,\,&\text{ in } M,\\
  u=\Delta_fu-q\frac{\partial u}{\partial\nu}=0\,\,\,\,&\text{ on } \partial M;
  \end{cases}
   \intertext{}
   & \begin{cases}\label{15}
   \Delta_f^2u=0\,\,\,\,&\text{ in } M,\\
   u=\frac{\partial^2u}{\partial \nu^2}-q\frac{\partial u}{\partial \nu}=0\,\,\,\,&\text{ on } \partial M,
  \end{cases}
\end{align}
where $\nu$ denotes the outward unit normal on $\partial M$. The first non-zero eigenvalues
of the above problems will be denoted by  $p_1$ and $q_1$, respectively. We will use the same letter for the first non-zero eigenvalues of last two problems because whenever the weighted mean curvature of $\partial M$ is constant then the problems are equivalents. 

Lastly, for the sake of simplicity, we will omit the weighted volume element in the integrals in all text. Now, we are able to introduce our results. 

\medskip
 Our first result reads as follows:

\begin{theorem}\label{3}
Let $M^{n+1}$ be a compact weighted Riemannian manifold with $\Ric_f^k\geq0$ and boundary 
  $\partial M$.  Assume that the weighted 
mean curvature of $\partial M$ satisfies $H_f\geq\frac{(k-1)c}{n}$, to some positive constant $c$, and that second fundamental form 
$II\geq cI$ , in the quadratic form sense. Denote by $\lambda_1$ the first non-zero eigenvalue of the Drift
Laplacian acting on functions on $\partial M$. Let $p_1$ be  the first eigenvalue of the 
weighted Stekloff eigenvalue problem $(\ref{steklov})$. 
  Then,
\begin{equation}
 p_1\leq \frac{\sqrt{\lambda_1}}{(k-1)c}(\sqrt{\lambda_1}+\sqrt{\lambda_1-(k-1)c^2})
\end{equation}
with equality occurs if and only if $M$ is isometric to an $n$-dimensional euclidean ball of 
radius $\frac{1}{c}$, $f$ is constant and $k=n+1$.
\end{theorem}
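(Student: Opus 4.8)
The plan is to feed the first eigenfunctions of the boundary drift Laplacian into a weighted Reilly formula, and to control the resulting interior term with the hypothesis $\Ric_f^k\ge0$. Let $z$ be a first eigenfunction on $\partial M$, normalized so that $\Delta_f z=-\lambda_1 z$ and $\int_{\partial M}z=0$, and let $u$ solve $\Delta_f u=0$ in $M$ with $u=z$ on $\partial M$ (the drift-harmonic extension). Since $\int_{\partial M}u=0$, the Rayleigh characterization of $p_1$ together with the weighted Green identity $\int_M|\g u|^2=\int_{\partial M}u\,u_\nu$ (valid because $\Delta_f u=0$), where $u_\nu:=\tfrac{\partial u}{\partial\nu}$, gives
\begin{equation*}
p_1\le\frac{\int_M|\g u|^2}{\int_{\partial M}u^2}=\frac{\int_{\partial M}z\,u_\nu}{\int_{\partial M}z^2}=:\frac{A}{C},
\end{equation*}
where I also set $B:=\int_{\partial M}u_\nu^2$. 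It then suffices to bound $A/C$ from above.

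The key interior estimate comes from the refined Bochner/Reilly machinery. Writing $\Delta u$ for the full Laplacian, Cauchy--Schwarz applied to $\Delta_f u=\Delta u-\langle\g u,\g f\rangle$ yields
\begin{equation*}
\frac{(\Delta_f u)^2}{k}\le\frac{(\Delta u)^2}{n+1}+\frac{\langle\g u,\g f\rangle^2}{k-n-1}\le|\Hess u|^2+\frac{\langle\g u,\g f\rangle^2}{k-n-1}.
\end{equation*}
Combining this with $\Ric_f=\Ric_f^k+\tfrac{df\otimes df}{k-n-1}$, $\Ric_f^k\ge0$ and $\Delta_f u=0$ gives $|\Hess u|^2+\Ric_f(\g u,\g u)\ge0$. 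Hence the interior integrand of the weighted Reilly formula
\begin{equation*}
\int_M\!\big[(\Delta_f u)^2-|\Hess u|^2-\Ric_f(\g u,\g u)\big]=\int_{\partial M}\!\big[nH_f\,u_\nu^2+2u_\nu\,\Delta_f z+II(\g z,\g z)\big]
\end{equation*}
is $\le0$, so the boundary integral on the right is $\le0$ as well.

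Now I would plug in the boundary data. Using $\Delta_f z=-\lambda_1 z$, the bound $nH_f\ge(k-1)c$, the pinching $II(\g z,\g z)\ge c|\g z|^2$ together with the boundary integration by parts $\int_{\partial M}|\g z|^2=\lambda_1\int_{\partial M}z^2$, the inequality above becomes $(k-1)cB-2\lambda_1 A+c\lambda_1 C\le0$. Since $(k-1)c>0$ and $A^2\le BC$ by Cauchy--Schwarz, replacing $B$ by $A^2/C$ and dividing by $C$ yields, for $t:=A/C$,
\begin{equation*}
(k-1)c\,t^2-2\lambda_1 t+c\lambda_1\le0.
\end{equation*}
The existence of a real $t$ satisfying this forces the discriminant to be nonnegative, i.e. $\lambda_1\ge(k-1)c^2$ (so the square root in the statement is automatically real), and places $t$ between the two roots. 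The larger root is exactly $\tfrac{\sqrt{\lambda_1}}{(k-1)c}\big(\sqrt{\lambda_1}+\sqrt{\lambda_1-(k-1)c^2}\big)$, and since $p_1\le t$, the claimed estimate follows.

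For the rigidity statement I would trace back every inequality. Equality in the interior estimate forces $\Ric_f^k(\g u,\g u)=0$ and, via the two Cauchy--Schwarz steps with $\Delta_f u=0$, both $\Delta u=0$ and $\langle\g u,\g f\rangle=0$, hence $\Hess u=0$; equality on the boundary forces $II=cI$ on $\supp\g z$, $H_f=\tfrac{(k-1)c}{n}$ wherever $u_\nu\ne0$, and $u_\nu=p_1 z$. On the Euclidean ball of radius $1/c$ these are realized by the coordinate functions (drift-harmonic with $f$ constant, vanishing Hessian, restricting to first spherical harmonics), and the umbilicity $II=cI$ with $H_f=c$ forces $\tfrac{(k-1)c}{n}=c$, i.e. $k=n+1$ and $f$ constant. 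I expect the genuine difficulty to lie in the converse direction of the rigidity: upgrading the pointwise data ($\Hess u=0$ for the extensions of a full first eigenspace, a totally umbilic and round boundary, and $\g f=0$) to a global isometry with the ball, which demands an Obata/de Rham splitting type argument rather than the essentially algebraic manipulations that suffice for the inequality itself.
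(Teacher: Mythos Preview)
Your proof of the inequality is correct and follows the paper's plan: extend a first boundary eigenfunction $z$ drift-harmonically, plug it into the weighted Reilly identity, and use $\Ric_f^k\ge0$, $nH_f\ge(k-1)c$, $II\ge cI$ to arrive at $(k-1)cB-2\lambda_1 A+c\lambda_1 C\le0$. The only difference is cosmetic: the paper multiplies the two Rayleigh-type inequalities to get $p_1^2\le B/C$ and then bounds $\sqrt{B/C}$ by completing the square (after a Cauchy--Schwarz on $\int_{\partial M}hz$), whereas you use only $p_1\le t:=A/C$, replace $B$ by $A^2/C$ via Cauchy--Schwarz, and read off $t$ as lying between the roots of $(k-1)ct^2-2\lambda_1 t+c\lambda_1$. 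Both reach the same bound, and your remark that the discriminant condition recovers $\lambda_1\ge(k-1)c^2$ is a pleasant byproduct.

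The genuine gap is the rigidity, which you yourself flag as unfinished. The paper does \emph{not} run a direct Obata/de~Rham splitting. Instead, once equality forces $\Hess u=0$, $u_\nu=p_1 z$, and $nH_f=(k-1)c$ on the support of $u_\nu$, one takes the tangential trace of $\Hess u=0$ along $\partial M$: for a local orthonormal frame $\{e_i\}_{i=1}^n$ tangent to $\partial M$ one computes
\[
0=\sum_{i=1}^n\Hess u(e_i,e_i)=\overline{\Delta}_f z+nH_f\,u_\nu=-\lambda_1 z+(k-1)c\,p_1\,z,
\]
hence $\lambda_1=(k-1)c\,p_1$. Inserting the assumed equality value of $p_1$ forces $\sqrt{\lambda_1-(k-1)c^2}=0$, i.e.\ $\lambda_1=(k-1)c^2$. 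This is exactly the equality case of Proposition~\ref{4}, whose rigidity clause already yields that $M$ is the Euclidean ball of radius $1/c$, $f$ is constant, and $k=n+1$. So the ``hard'' step you anticipate is not redone here but delegated to that auxiliary eigenvalue estimate.
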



The second result is the following:

\begin{theorem}\label{12}
Let $M^{n+1}$ be a compact connected weighted Riemannian manifold with $\Ric_f^k\geq0$ and boundary 
 $\partial M$. Assume that the weighted mean curvature of $\partial M$  satisfies $H_f\geq \frac{k-1}{k}c$, to some positive constant $c$. Let $q_1$ be the first eigenvalue of the 
weighted Stekloff eigenvalue problem $(\ref{2})$. Then
$$q_1\geq nc.$$ 
Moreover, equality occurs if  and only if $M$ is isometric to a euclidean ball of radius $\frac{1}{c}$ in 
$\mathbb R^{n+1}$, $f$ is constant and $k=n+1$.
\end{theorem}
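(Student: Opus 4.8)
The plan is to test a weighted Reilly formula against a first eigenfunction. Let $u$ be an eigenfunction associated with $q_1$, so that $\Delta_f^2 u=0$ in $M$, $u=0$ on $\partial M$, and $\Delta_f u=q_1\,\frac{\partial u}{\partial\nu}$ on $\partial M$. I would first record that $\frac{\partial u}{\partial\nu}$ is not identically zero on $\partial M$: otherwise $w:=\Delta_f u$ would be $f$-harmonic with $w=q_1\frac{\partial u}{\partial\nu}=0$ on $\partial M$, hence $w\equiv0$ by the maximum principle, and then $u$ itself would be $f$-harmonic with vanishing boundary data, forcing $u\equiv0$. The core step is to apply the weighted Reilly formula to $u$. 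Since $u$ vanishes on $\partial M$, its tangential gradient and tangential Laplacian vanish there, so $\nabla u=\frac{\partial u}{\partial\nu}\,\nu$ along $\partial M$, and a short computation gives $\Hess u(\nu,\nu)=\Delta_f u-nH_f\frac{\partial u}{\partial\nu}$. All tangential boundary contributions therefore collapse and the Reilly identity reduces to
\[
\int_M\Big[(\Delta_f u)^2-|\Hess u|^2-\Ric_f(\nabla u,\nabla u)\Big]=n\int_{\partial M}H_f\Big(\frac{\partial u}{\partial\nu}\Big)^2.
\]

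Next I would bound the interior integrand from above and evaluate the boundary data. For the interior, I combine the weighted mean inequality $\frac{(\Delta u)^2}{n+1}+\frac{\langle\nabla f,\nabla u\rangle^2}{k-n-1}\ge\frac{(\Delta_f u)^2}{k}$ with the trace bound $|\Hess u|^2\ge\frac{(\Delta u)^2}{n+1}$ and the definition \eqref{mBE} of $\Ric_f^k$; using $\Ric_f^k\ge0$ this yields the pointwise estimate $|\Hess u|^2+\Ric_f(\nabla u,\nabla u)\ge\frac{(\Delta_f u)^2}{k}$, so that the interior integrand is at most $\frac{k-1}{k}(\Delta_f u)^2$. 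Invoking the hypothesis $H_f\ge\frac{k-1}{k}c$ on the right-hand side of the displayed identity then gives $nc\int_{\partial M}(\frac{\partial u}{\partial\nu})^2\le\int_M(\Delta_f u)^2$. To compute the right-hand integral I would use $\Delta_f^2u=0$: with $w=\Delta_f u$ being $f$-harmonic and $u=0$ on $\partial M$, the weighted Green identity gives $\int_M(\Delta_f u)^2=\int_{\partial M}w\,\frac{\partial u}{\partial\nu}=q_1\int_{\partial M}(\frac{\partial u}{\partial\nu})^2$. Combining this with the previous inequality and dividing by $\int_{\partial M}(\frac{\partial u}{\partial\nu})^2>0$ yields $q_1\ge nc$.

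The rigidity statement is where I expect the real difficulty to lie. Equality forces equality in each estimate used: $\Ric_f^k(\nabla u,\nabla u)\equiv0$, equality in the trace inequality, i.e.\ $\Hess u=\frac{\Delta u}{n+1}\,g$, equality in the weighted mean inequality, i.e.\ $\langle\nabla f,\nabla u\rangle=-\frac{k-n-1}{n+1}\Delta u$, and $H_f\equiv\frac{k-1}{k}c$ on $\partial M$. The relation $\Hess u=\psi g$ with $\psi=\frac{\Delta u}{n+1}$ is an Obata--Reilly type equation, and the main obstacle is to show that these simultaneous equalities are incompatible with $k>n+1$, forcing $k=n+1$ and hence, by the very definition \eqref{mBE}, $f$ constant. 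Once $f$ is constant and $k=n+1$, one has $\Ric\ge0$ together with a nonconstant function satisfying $\Hess u=\psi g$, and the classical Obata-type rigidity identifies $M$ isometrically with a Euclidean ball, the radius being determined by the saturated mean-curvature condition $H_f\equiv\frac{k-1}{k}c$; conversely a direct check on the ball, with $u$ a radial quadratic, confirms that equality is attained.
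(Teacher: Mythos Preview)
Your proof of the inequality $q_1\ge nc$ is correct and is essentially the paper's argument: you apply the weighted Reilly identity (\ref{1}) to a first eigenfunction with vanishing boundary trace, bound the interior integrand via Proposition~\ref{9} together with $\Ric_f^k\ge 0$, use $H_f\ge\frac{k-1}{k}c$ on the boundary, and evaluate $\int_M(\Delta_f u)^2$ by the weighted Green identity. Your preliminary check that $\partial u/\partial\nu\not\equiv 0$ is also fine.

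The rigidity sketch, however, has a genuine gap. You correctly extract the equality conditions $\Hess u=\frac{\Delta u}{n+1}\,g$, $\langle\nabla f,\nabla u\rangle=-\frac{k-n-1}{n+1}\Delta u$, and $H_f\equiv\frac{k-1}{k}c$, but the Obata--Reilly lemma you want to invoke requires the conformal factor $\psi=\frac{\Delta u}{n+1}$ to be \emph{constant}, and you do not establish this. The paper supplies the missing step: since $\Hess u$ is a multiple of the metric, $\Hess u(e_i,\nu)=0$ for tangential $e_i$, which combined with $u|_{\partial M}=0$ forces $\eta=\partial u/\partial\nu$ to be constant on $\partial M$; then $\Delta_f u|_{\partial M}=q_1\eta$ is constant, and since $\Delta_f u$ is $f$-harmonic the maximum principle makes it constant on all of $M$, whence $\Delta u$ is constant as well. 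Only then does Reilly's lemma yield that $M$ is a Euclidean ball. Your proposed order---first rule out $k>n+1$, then apply Obata---is also reversed relative to what actually works: the paper first identifies $M$ with a ball and obtains the explicit radial form $u=\frac{\lambda}{2}r^2+C$, and only then derives from the equality condition that $f=-(k-n-1)\ln r+C$ when $k>n+1$, contradicting smoothness of $f$ at the center. It is not clear how one would exclude $k>n+1$ directly from the pointwise equality conditions without this intermediate identification.
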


The next results are

\begin{theorem}\label{17} 
Let $M^{n+1}$ be a compact connected weighted Riemannian manifold with boundary 
$\partial M$. Denote by $A,\,V$ the weighted area of $\partial M$ and the weighted volume of $M$, respectively.  Let $q_1$ be the first eigenvalue of the 
weighted Stekloff eigenvalue problem $(\ref{2})$. Then,
$$q_1\leq \frac{A}{V}.$$ 
Moreover, if in addition that the $\Ric_f^k$ of $M$ is non-negative and that there is a point $x_0\in \partial M$ such that $H_f(x_0)\geq\frac{A}{(n+1)V}$, and $q_1=\frac{A}{V}$ implies that $M$ is isometric to an $(n+1)$-dimensional Euclidean ball, $f$ is constant and $k=n+1$.
\end{theorem}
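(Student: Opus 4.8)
The plan is to combine the variational characterization of $q_1$ with a weighted torsion function for the inequality, and a Weinberger–Reilly type $P$-function argument for the rigidity. First I would record the Rayleigh characterization
\[
q_1=\inf\Big\{\tfrac{\int_M(\Delta_f u)^2\,e^{-f}}{\int_{\partial M}(\partial_\nu u)^2\,e^{-f}}:\ u|_{\partial M}=0,\ u\not\equiv 0\Big\},
\]
obtained by integrating $\Delta_f$ by parts against the density $e^{-f}$. As a test function I would take the weighted torsion potential $u$ solving $\Delta_f u=1$ in $M$, $u|_{\partial M}=0$. Then $\int_M(\Delta_f u)^2 e^{-f}=V$, while the divergence theorem for $\Delta_f$ gives $\int_{\partial M}\partial_\nu u\,e^{-f}=\int_M\Delta_f u\,e^{-f}=V$. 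Cauchy–Schwarz on $\partial M$ yields $V^2\le A\int_{\partial M}(\partial_\nu u)^2 e^{-f}$, hence $\int_{\partial M}(\partial_\nu u)^2e^{-f}\ge V^2/A$ and $q_1\le \tfrac{V}{\int_{\partial M}(\partial_\nu u)^2 e^{-f}}\le\tfrac{A}{V}$.

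For the rigidity, equality forces both inequalities above to be equalities. The Cauchy–Schwarz equality means $\partial_\nu u\equiv c_0:=V/A$ is constant on $\partial M$, so $u$ is indeed a first eigenfunction. I would then introduce the $P$-function $P=|\nabla u|^2-\tfrac{2}{k}u$. The weighted Bochner formula together with $\Delta_f u=1$ gives $\tfrac12\Delta_f|\nabla u|^2=|\Hess u|^2+\Ric_f(\nabla u,\nabla u)$; combining the refined inequality $|\Hess u|^2+\tfrac{\langle\nabla f,\nabla u\rangle^2}{k-n-1}\ge\tfrac{(\Delta_f u)^2}{k}$ with $\Ric_f(\nabla u,\nabla u)=\Ric_f^k(\nabla u,\nabla u)+\tfrac{\langle\nabla f,\nabla u\rangle^2}{k-n-1}$ and $\Ric_f^k\ge0$ yields $\tfrac12\Delta_f|\nabla u|^2\ge\tfrac1k$, so $\Delta_f P\ge0$. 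On $\partial M$ one has $u=0$ and $|\nabla u|=c_0$, hence $P\equiv c_0^2$ there; by the maximum principle $P\le c_0^2$ on $M$ with every boundary point a maximum. A boundary computation, using $u|_{\partial M}=0$ to get $\Hess u(\nu,\nu)=1-nc_0H_f$ (the $\partial_\nu f$ terms cancelling against $\mathrm{tr}\,II=nH$), gives $\partial_\nu P=2c_0\big(\tfrac{k-1}{k}-nc_0H_f\big)$. The hypothesis $H_f(x_0)\ge\tfrac{A}{(n+1)V}=\tfrac{1}{(n+1)c_0}$ makes $\partial_\nu P(x_0)\le0$ exactly at the borderline $k=n+1$, so Hopf's lemma forces $P$ to be constant.

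Once $P\equiv c_0^2$ all the intermediate inequalities become equalities: $\Ric_f^k(\nabla u,\nabla u)=0$ and $\Hess u=\tfrac1k g$. Analysing the second identity at the unique interior critical point of $u$ forces the metric to be $dr^2+r^2\,g_{\mathbb{S}^n}$, i.e. $M$ is a flat Euclidean ball. On this ball $\nabla u$ is radial, and equality in the Bakry–Émery step forces $\langle\nabla f,\nabla u\rangle\equiv-\tfrac{k-n-1}{k}$; comparing radial derivatives gives $\partial_r f=-\tfrac{k-n-1}{r}$, which is singular at the centre unless $k=n+1$. Hence $k=n+1$, and then $\partial_r f\equiv0$ forces $f$ to be constant, completing the rigidity.

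I expect the main obstacle to be the equality analysis rather than the inequality: obtaining the boundary term $\partial_\nu P$ in precisely the form $2c_0(\tfrac{k-1}{k}-nc_0H_f)$, so that the pointwise hypothesis at $x_0$ is exactly the condition making $\partial_\nu P(x_0)\le0$, and then extracting that the weight must be constant and $k=n+1$. In particular, the reconciliation of a general $k>n+1$ with the conclusion $k=n+1$ is delicate, since the $P$-function threshold matches the stated hypothesis only at $k=n+1$; it is ultimately the smoothness of $f$ at the centre of the ball, ruling out the logarithmic singularity in $\partial_r f$, that forces $k=n+1$, and this step must be handled with care.
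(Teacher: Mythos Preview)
Your approach is essentially identical to the paper's: the same torsion test function $\Delta_f w=1$, $w|_{\partial M}=0$, the same Cauchy--Schwarz step for the inequality, and for rigidity the same $P$-function (your $P=|\nabla u|^2-\tfrac{2}{k}u$ is exactly $2\phi$ with the paper's $\phi=\tfrac12|\nabla w|^2-\tfrac{w}{k}$), the same Bochner subharmonicity, the same boundary computation $\partial_\nu\phi=n\tfrac{V}{A}\big(\tfrac{k-1}{kn}-H_f\tfrac{V}{A}\big)$, and the same Hopf lemma.

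The one substantive gap is your handling of the mismatch you yourself flag. You correctly observe that $\partial_\nu P(x_0)\le 0$ requires $H_f(x_0)\ge\tfrac{(k-1)A}{knV}$, which for $k>n+1$ is \emph{strictly stronger} than the stated hypothesis $H_f(x_0)\ge\tfrac{A}{(n+1)V}$. Your proposed resolution --- apply Hopf anyway, deduce $M$ is a ball, and then let the logarithmic singularity of $\partial_r f$ force $k=n+1$ --- is circular: the singularity argument only makes sense once $M$ is already a ball, which requires $P$ constant, which in turn requires ruling out $\partial_\nu P>0$ at $x_0$, which you cannot do from the weaker hypothesis when $k>n+1$. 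The paper does not attempt this workaround; in its proof it actually assumes $H_f(x_0)\ge\tfrac{(k-1)A}{knV}$ (so the constant $\tfrac{A}{(n+1)V}$ in the statement appears to be a misprint, coinciding with the correct threshold only at $k=n+1$). With that threshold in hand, Hopf applies for every admissible $k$, $P$ is constant, equality in Proposition~\ref{9} gives $\Hess w=\tfrac{\Delta w}{n+1}\langle\,,\rangle$ and $\Delta_f w=\tfrac{k}{n+1}\Delta w$, and the remainder of your rigidity sketch (Reilly's lemma to identify the ball, then the smoothness obstruction forcing $f$ constant and $k=n+1$) goes through exactly as you outline.
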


and

\begin{theorem}\label{191}
  Let $M^{n+1}$ be a compact connected weighted Riemannian manifold with 
$\Ric_f^k\geq0$ and boundary $\partial M$ nonempty. 
  Assume that $H_f\geq \frac{(k-1)c}{n}$, for some positive constant $c$. Let $q_1$ be the 
first eigenvalue of the problem $(\ref{15})$. Then $$q_1\geq c.$$ Moreover, equality 
occurs if  and only if $M$ is isometric to a ball of radius  $\frac{1}{c}$ in  $\mathbb R^{n+1}$, $f$ is constant and $k=n+1$.
 \end{theorem}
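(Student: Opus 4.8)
The plan is to test the weighted Reilly formula against a first eigenfunction of $(\ref{15})$ and combine it with two integration-by-parts identities. Let $u$ be an eigenfunction associated with $q_1$, so that $\Delta_f^2u=0$ in $M$, while $u=0$ and $\frac{\partial^2u}{\partial\nu^2}=q_1\frac{\partial u}{\partial\nu}$ on $\partial M$. Since $u$ vanishes on $\partial M$, its tangential gradient and tangential Laplacian vanish there and $\nabla u=\frac{\partial u}{\partial\nu}\,\nu$ on the boundary. Decomposing the Laplacian along $\partial M$ and using $H_f=H-\frac1n\langle\nu,\nabla f\rangle$, I would first record the boundary identity
\begin{equation*}
\Delta_f u=\frac{\partial^2u}{\partial\nu^2}+nH_f\,\frac{\partial u}{\partial\nu}=(q_1+nH_f)\,\frac{\partial u}{\partial\nu}\qquad\text{on }\partial M .
\end{equation*}

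First I would multiply $\Delta_f^2u=0$ by $u$ and integrate by parts twice with the weighted Green formula; the boundary term of the first integration vanishes because $u=0$ on $\partial M$, which yields
\begin{equation*}
\int_M(\Delta_f u)^2=\int_{\partial M}(\Delta_f u)\,\frac{\partial u}{\partial\nu}=\int_{\partial M}(q_1+nH_f)\Big(\frac{\partial u}{\partial\nu}\Big)^2 .
\end{equation*}
Next I would apply the weighted Reilly formula to $u$. Because every tangential contribution of $u$ on $\partial M$ is zero, its boundary integrand collapses to $nH_f(\partial u/\partial\nu)^2$, so that
\begin{equation*}
\int_M\Big[(\Delta_f u)^2-|\Hess u|^2-\Ric_f(\nabla u,\nabla u)\Big]=\int_{\partial M}nH_f\Big(\frac{\partial u}{\partial\nu}\Big)^2 .
\end{equation*}

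The heart of the argument is to bound the left-hand side from above using the parameter $k$. Writing $\Delta_f u=\Delta u-\langle\nabla u,\nabla f\rangle$ and applying the Cauchy--Schwarz inequality to the splitting into the $(n+1)$ diagonal directions of $\Hess u$ and the drift direction gives $(\Delta_f u)^2\le k\big(|\Hess u|^2+\tfrac{\langle\nabla u,\nabla f\rangle^2}{k-n-1}\big)$, that is,
\begin{equation*}
|\Hess u|^2\ge\frac{(\Delta_f u)^2}{k}-\frac{\langle\nabla u,\nabla f\rangle^2}{k-n-1}.
\end{equation*}
Feeding this together with $\Ric_f(\nabla u,\nabla u)=\Ric_f^k(\nabla u,\nabla u)+\tfrac{\langle\nabla u,\nabla f\rangle^2}{k-n-1}$ and $\Ric_f^k\ge0$ into the Reilly identity bounds its left-hand side by $\frac{k-1}{k}\int_M(\Delta_f u)^2$. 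Combining with the two displayed identities produces
\begin{equation*}
n\int_{\partial M}H_f\Big(\frac{\partial u}{\partial\nu}\Big)^2\le\frac{k-1}{k}\int_{\partial M}(q_1+nH_f)\Big(\frac{\partial u}{\partial\nu}\Big)^2 ,
\end{equation*}
and after isolating $q_1$ this reads $q_1\ge\frac{n\int_{\partial M}H_f(\partial u/\partial\nu)^2}{(k-1)\int_{\partial M}(\partial u/\partial\nu)^2}$. Since a nontrivial eigenfunction has $\partial u/\partial\nu\not\equiv0$, the hypothesis $H_f\ge\frac{(k-1)c}{n}$ immediately gives $q_1\ge c$.

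The delicate part, which I expect to be the main obstacle, is the equality discussion. Equality forces $\Ric_f^k(\nabla u,\nabla u)=0$, equality in the Cauchy--Schwarz step, and $H_f\equiv\frac{(k-1)c}{n}$ on $\{\partial u/\partial\nu\neq0\}$. The Cauchy--Schwarz equality means $\Hess u=\frac{\Delta u}{n+1}\,g$ together with the drift term $\langle\nabla u,\nabla f\rangle$ being proportional to $\Delta u$; tracking these proportionalities against $\Ric_f^k(\nabla u,\nabla u)=0$ is what forces $f$ to be constant and $k=n+1$, reducing everything to the classical setting $\Delta_f=\Delta$, $\Ric_f^k=\Ric$. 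A non-constant function satisfying $\Hess u=\frac{\Delta u}{n+1}g$ then forces, by a Tashiro--Obata type rigidity theorem, $M$ to be rotationally symmetric; the vanishing of $u$ on $\partial M$ together with $H=c$ identifies $M$ with the Euclidean ball of radius $\frac1c$. Verifying that the algebraic equalities genuinely collapse to $k=n+1$ with $f$ constant, and running the rigidity argument cleanly, is where the real work lies.
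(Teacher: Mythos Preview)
Your derivation of the inequality $q_1\geq c$ is essentially identical to the paper's: the same boundary identity $\Delta_f u=(q_1+nH_f)\partial_\nu u$, the same integration-by-parts formula for $\int_M(\Delta_f u)^2$, and the same use of the weighted Reilly formula together with Proposition~\ref{9} to reach $\frac{k-1}{k}\int_M(\Delta_f u)^2\geq n\int_{\partial M}H_f(\partial_\nu u)^2$.

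Where your outline diverges from the paper, and where there is a genuine gap, is the equality case. Your suggestion that the conditions $\Ric_f^k(\nabla u,\nabla u)=0$ plus the proportionalities in Proposition~\ref{9} directly force $f$ constant and $k=n+1$ is not how the paper proceeds, and it is not clear this can be made to work: $\Ric_f^k(\nabla u,\nabla u)=0$ is only a condition along $\nabla u$, and the relation $\langle\nabla u,\nabla f\rangle=-\tfrac{k-n-1}{k}\Delta_f u$ does not by itself pin down $f$. Moreover, a Tashiro--Obata type argument requires $\Hess u=\lambda g$ with $\lambda$ \emph{constant}, not merely $\Hess u=\tfrac{\Delta u}{n+1}g$; you have not yet established that $\Delta u$ (or $\Delta_f u$) is constant.

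The paper fills this gap in a different order. From $\Hess u(e_i,\nu)=0$ and $u|_{\partial M}=0$ one gets that $\eta=\partial_\nu u$ is constant along $\partial M$; equality in the $H_f$-step then gives $H_f\equiv\tfrac{(k-1)c}{n}$ everywhere, so $(\Delta_f u)|_{\partial M}$ is constant. Since $\Delta_f u$ is $f$-harmonic ($\Delta_f^2u=0$), the maximum principle makes $\Delta_f u$ constant on all of $M$. Only then does the rigidity step apply: with $\Hess u=\tfrac{\Delta_f u}{k}\langle\,,\rangle$ constant and $u|_{\partial M}=0$, Reilly's lemma (Lemma~3 in \cite{robert}) identifies $M$ with a Euclidean ball. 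Finally, on the ball one computes $u$ explicitly and uses $\langle\nabla u,\nabla f\rangle=-\tfrac{k-n-1}{k}\Delta_f u$ to see that $k>n+1$ would force $f=-(k-n-1)\ln r+C$, contradicting smoothness; hence $k=n+1$ and $f$ is constant. You should replace your sketched route to rigidity with this sequence of steps.
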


Lastly, we announce a sharp estimate of the first non-zero Stekloff eigenvalue of surfaces about suitable hypotheses.

\begin{theorem}\label{Escobar}
Let $M^2$ be a compact weighted Riemannian manifold with boundary. Assume that 
$M$ has non-negative $\Ric_f$, and that the  geodesic curvature of
$\partial M,\,k_g$ satisfies $k_g-f_{\nu}\geq c>0$. Let $p_1$ be the first
non-zero eigenvalue of the Stekloff problem $(\ref{steklov})$.  Assume that $f$ is constant on
the boundary $\partial M$, then $p_1\geq c.$ Moreover, the equality occur if and only if
$M$ is the Euclidean ball of radius $c^{-1}$ and $f$ is constant.
\end{theorem}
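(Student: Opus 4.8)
The plan is to adapt Escobar's surface argument from \cite{escobar} to the drift setting, using the weighted Bochner--Reilly machinery applied to a first eigenfunction. Let $u$ be a first eigenfunction of $(\ref{steklov})$, so that $\Delta_fu=0$ in $M$, $\partial u/\partial\nu=p_1u$ on $\partial M$, and $\int_{\partial M}u=0$. Since only $\Ric_f\ge0$ (the $\infty$-Bakry--\'Emery tensor) is assumed, the natural engine is the weighted Bochner formula
\[
\tfrac12\Delta_f|\nabla u|^2=|\Hess u|^2+\langle\nabla\Delta_fu,\nabla u\rangle+\Ric_f(\nabla u,\nabla u),
\]
which, using $\Delta_fu=0$ and $\Ric_f\ge0$, shows that $|\nabla u|^2$ is $\Delta_f$-subharmonic. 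Integrating and applying the weighted divergence theorem gives $\int_M\big(|\Hess u|^2+\Ric_f(\nabla u,\nabla u)\big)=\tfrac12\int_{\partial M}\partial_\nu|\nabla u|^2\ge0$.

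First I would compute the boundary integrand. Along $\partial M$ write $\nabla u=u_TT+u_\nu\nu$, where $T$ is the unit tangent; since $\partial M$ is a curve its mean curvature is the geodesic curvature $k_g$. From $\tfrac12\partial_\nu|\nabla u|^2=\Hess u(\nu,\nabla u)=u_T\Hess u(\nu,T)+u_\nu\Hess u(\nu,\nu)$ and the boundary identities $\Hess u(\nu,T)=T(u_\nu)-k_gu_T=(p_1-k_g)u_T$ and $\Hess u(\nu,\nu)=\Delta u-\Delta_{\partial M}u-k_gu_\nu$, together with $\Delta u=\langle\nabla u,\nabla f\rangle=f_\nu u_\nu$ on $\partial M$ (the tangential part of $\nabla f$ vanishing precisely because $f$ is constant on $\partial M$) and $u_\nu=p_1u$, the right-hand side becomes boundary integrals of $u_T^2$ and $u^2$. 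As $f$ is constant on $\partial M$, the boundary drift-Laplacian coincides with the ordinary one and integration by parts on the closed curve $\partial M$ is clean, turning $-\int u\,\Delta_{\partial M}u$ into $\int u_T^2$. This should yield
\[
\int_M\big(|\Hess u|^2+\Ric_f(\nabla u,\nabla u)\big)=\int_{\partial M}(2p_1-k_g)u_T^2-p_1^2\int_{\partial M}(k_g-f_\nu)u^2,
\]
whose left-hand side is nonnegative and in which $k_g-f_\nu=H_f\ge c$.

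I expect the main obstacle to be the tangential term $\int_{\partial M}(2p_1-k_g)u_T^2$: its coefficient is sign-indefinite, and unlike in Theorem \ref{3}, where the hypothesis $II\ge cI$ directly dominates the analogous contribution, here a $k_g$-weighted tangential term must be reconciled with an $H_f$-weighted zeroth-order term. This is where two-dimensionality has to be used. I would exploit the conformal structure of the drift-harmonic equation: since $e^{-f}\nabla u$ is divergence free it admits a conjugate potential, and $|\nabla u|$ enjoys the two-dimensional maximum-principle enhancements, which together should furnish the missing relation controlling $\int_{\partial M}u_T^2$ by $\int_{\partial M}u^2$ (with equality in the model, where $\int_{\partial M}u_T^2=\int_{\partial M}u_\nu^2=p_1^2\int_{\partial M}u^2$) and force $p_1\ge c$. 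For the rigidity, equality propagates back: it forces $\Hess u\equiv0$ and $\Ric_f(\nabla u,\nabla u)\equiv0$ on $M$ and $H_f\equiv c$ on $\partial M$. A nonconstant $u$ with $\Hess u\equiv0$ has parallel gradient, which on a surface forces $M$ to be flat and $u$ affine; then $\Ric_f(\nabla u,\nabla u)=\Hess f(\nabla u,\nabla u)=0$ with $f$ constant on $\partial M$ forces $f$ constant, and a closed boundary curve of constant geodesic curvature $c$ bounding a flat surface is a circle of radius $1/c$, identifying $M$ with the Euclidean disk of radius $c^{-1}$.
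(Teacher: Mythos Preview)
Your boundary computation is correct, and the identity
\[
\int_M\big(|\Hess u|^2+\Ric_f(\nabla u,\nabla u)\big)=\int_{\partial M}(2p_1-k_g)\,u_T^2-p_1^2\int_{\partial M}(k_g-f_\nu)\,u^2
\]
is exactly the weighted Reilly formula specialized to a surface. The gap is the step you yourself flag: from this identity one cannot extract $p_1\ge c$. The tangential term carries the coefficient $2p_1-k_g$, and no relation between $\int_{\partial M}u_T^2$ and $\int_{\partial M}u^2$ of the strength you need is available; the Wirtinger inequality on $\partial M$ goes the wrong way, and the ``conformal/conjugate'' heuristic you invoke does not produce the missing control. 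Indeed, the integral route is precisely what the paper uses in the theorem immediately preceding this one (for $n\ge2$) and it yields only $p_1>c/2$, which the authors explicitly note is not sharp. Nothing about dimension two improves the integral inequality itself.

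The paper's proof (following Escobar) abandons integration and instead runs a \emph{pointwise} argument. Since $v=\tfrac12|\nabla u|^2$ is $\Delta_f$-subharmonic, the Hopf boundary point lemma forces either $v\equiv\text{const}$ or $\partial v/\partial\nu(P)>0$ at the boundary maximum $P$. In the second case one works in Fermi coordinates at $P$: the criticality condition $\partial v/\partial x(P)=0$ together with the Steklov boundary condition and $\Delta_f u=0$ lets one solve for all the second derivatives of $u$ at $P$ and rewrite $\partial v/\partial\nu(P)$ as $(p_1-k_g+f_\nu)|\nabla u|^2(P)$ (or, in the degenerate subcase $u_T(P)=0$, a quadratic expression with the same conclusion), so the strict sign from Hopf gives $p_1>k_g(P)-f_\nu(P)\ge c$. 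The case $v\equiv\text{const}$ is what produces equality and the rigidity, along the lines you sketch. The essential idea you are missing is that two-dimensionality is exploited not through a global conformal identity but through this local Fermi-coordinate computation at a single boundary point, where the Hopf lemma supplies the strict inequality that the integrated Reilly formula cannot.
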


\section{Preliminaries}

In this section we recall some necessary results to prove the theorems enunciated in the introduction. We will present some proofs for the sake of completeness.

\medskip
In \cite{batista} the authors proved the following useful inequality. 
\begin{proposition}\label{9}
 Let $u$ be a smooth function on $M^{n+1}$. then we have 
 \begin{equation*}
  |\Hess u|^2+\Ric_f(\nabla u,\nabla u)\geq \frac{(\Delta_f u)^2}{k}+\Ric_f^k(\nabla u,\nabla u),
 \end{equation*}
for every $k>n+1$ or $k=n+1$ and $f$ is a constant. Moreover, equality holds if and only if $\Hess u=\frac{\Delta u}{n+1} \langle\,,\rangle$ and 
$\langle\nabla u,\nabla f\rangle=-\frac{k-n-1}{k}\Delta_f u \footnote{This term only appear in the case of a  non constant function.}$.
\end{proposition}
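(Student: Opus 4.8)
The plan is to reduce the stated inequality to a purely pointwise algebraic statement and then recognize that statement as a perfect square. Substituting the definition $\Ric_f^k=\Ric_f-\frac{df\otimes df}{k-n-1}$ from (\ref{mBE}) into the right-hand side and cancelling the common term $\Ric_f(\nabla u,\nabla u)$ appearing on both sides, the inequality becomes equivalent to
\[
|\Hess u|^2 \geq \frac{(\Delta_f u)^2}{k} - \frac{\langle\nabla f,\nabla u\rangle^2}{k-n-1},
\]
where I have used $(df\otimes df)(\nabla u,\nabla u)=\langle\nabla f,\nabla u\rangle^2$. Thus the curvature tensors play no role once (\ref{mBE}) is unwound, and everything reduces to an estimate involving only the Hessian and the two scalars $\Delta u$ and $\langle\nabla f,\nabla u\rangle$.

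For the second step I would apply the Cauchy--Schwarz inequality to the symmetric tensor $\Hess u$ on the $(n+1)$-dimensional manifold $M$, giving
\[
|\Hess u|^2 \geq \frac{(\operatorname{tr}\Hess u)^2}{n+1} = \frac{(\Delta u)^2}{n+1},
\]
with equality precisely when $\Hess u = \frac{\Delta u}{n+1}\langle\,,\rangle$. Writing $a=\Delta u$ and $b=\langle\nabla f,\nabla u\rangle$, so that $\Delta_f u = a-b$, it then suffices to verify the scalar inequality
\[
\frac{a^2}{n+1} + \frac{b^2}{k-n-1} \geq \frac{(a-b)^2}{k}.
\]

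The third and final step is to prove this scalar inequality. Multiplying through by $k$ and rearranging, one checks it is equivalent to
\[
a^2\,\frac{k-n-1}{n+1} + b^2\,\frac{n+1}{k-n-1} + 2ab \geq 0,
\]
and since $k>n+1$ the left-hand side is exactly the perfect square $\bigl(a\sqrt{\tfrac{k-n-1}{n+1}} + b\sqrt{\tfrac{n+1}{k-n-1}}\bigr)^2$, because the cross term equals $2ab$. Chaining the three steps gives the claimed inequality.

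Finally I would track the equality cases. Equality forces equality in Cauchy--Schwarz, i.e.\ $\Hess u = \frac{\Delta u}{n+1}\langle\,,\rangle$, together with vanishing of the perfect square, i.e.\ $(k-n-1)a = -(n+1)b$; a short rearrangement shows this last relation is precisely $\langle\nabla u,\nabla f\rangle = -\frac{k-n-1}{k}\Delta_f u$, matching the stated condition. The degenerate case $k=n+1$ has to be handled separately, since then the term $\frac{df\otimes df}{k-n-1}$ is undefined; but in that case $f$ is constant by hypothesis, whence $\Ric_f^k=\Ric_f$ and $\Delta_f u=\Delta u$, and the whole statement collapses to the bare Cauchy--Schwarz inequality $|\Hess u|^2\geq \frac{(\Delta u)^2}{n+1}$ with its standard equality condition. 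I expect the only genuine subtlety to lie in this bookkeeping of the equality conditions and in the separate treatment of $k=n+1$, rather than in any of the estimates, which are elementary.
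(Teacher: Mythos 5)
Your proposal is correct and follows essentially the same route as the paper: the Cauchy--Schwarz bound $(\Delta u)^2\leq(n+1)|\Hess u|^2$ combined with the scalar inequality $\frac{a^2}{n+1}+\frac{b^2}{k-n-1}\geq\frac{(a-b)^2}{k}$ applied to $a=\Delta u$, $b=\langle\nabla f,\nabla u\rangle$, together with the identical equality analysis. The only differences are cosmetic: you actually verify the scalar inequality by exhibiting the perfect square (the paper merely asserts it with its equality condition), and you handle the degenerate case $k=n+1$ explicitly, which the paper leaves implicit.
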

\begin{proof}
 Let $\{e_1,\ldots,e_{n+1}\}$ be a orthonormal basis of $T_pM$, then by Cauchy-Schwarz inequality
 we have that
 \begin{align}\label{10}
  (\Delta u)^2 \leq(n+1)|\Hess u|^2.
 \end{align}
Using that $\frac{1}{n+1}a^2+\frac{1}{k-n-1}b^2\geq\frac{1}{k}(a-b)^2$ with equality if and only if 
\begin{equation}\label{11}
a=-\frac{(n+1)b}{k-n-1}, 
\end{equation}

we obtain
\begin{align}
|\Hess u|^2+\Ric_f(\nabla u,\nabla u)&\geq \frac{1}{n+1}(\Delta u)^2+ \Ric_f^k(\nabla u,\nabla u)+
\frac{\langle\nabla f,\nabla u\rangle^2 }{k-n-1}\nonumber\\
&\geq\frac{1}{k}(\Delta u-\langle\nabla f,\nabla u\rangle)^2+\Ric_f^k(\nabla u,\nabla u)\\
&=\frac{1}{k}(\Delta_f u)^2+\Ric_f^k(\nabla u,\nabla u).\nonumber
\end{align}
If the equality holds, then since we use the Cauchy-Schwarz's inequality in $(\ref{10})$ we obtain that
$\Hess u=\lambda\langle\,,\rangle$, and by $(\ref{11})$ 
$$\Delta u=-\frac{(n+1)\langle\nabla f,\nabla u\rangle}{k-n-1},$$
Consequently
 $$\Delta_fu=-\frac{(n+1)\langle\nabla 
f,\nabla u\rangle}{k-n-1}-\langle\nabla f,\nabla 
u\rangle=-\frac{k}{k-n-1}\langle\nabla f,\nabla u\rangle.$$
The converse is immediate.
\end{proof}

In \cite{lima} the authors showed that, for a smooth function $u$ defined on an $n$-dimensional compact weighted manifold $M$ with 
boundary $\partial M$, the following identity holds if $h=\frac{\partial u}{\partial \nu},\,z=u|_{\partial M}$ and $\Ric_f$
denotes the generalized Ricci curvature of $M$:
\begin{align}\label{1}
 \int_M[(\Delta_f u)^2-&|\Hess u|^2- \Ric_f(\nabla u,\nabla u)]=\\
&= \int_{\partial M}\left[nH_fh^2+2h\overline{\Delta}_fz+II(\overline{\nabla}z,\overline{\nabla}z)\right].\nonumber
\end{align}
 Here, $\overline{\Delta}$ and $\overline{\nabla}$ represent the Laplacian and the gradient on $\partial M$ with respect to the
 induced metric on $\partial M$, respectively. 
 

Using the Proposition \ref{9} we have that
 \begin{align}\label{6}
  \int_M\frac{k-1}{k}[(\Delta_fu)^2-&\Ric_f^k(\nabla u,\nabla u)]\geq\\
  &\geq\int_{\partial M}[nH_fh^2+2h\overline{\Delta}_fz+II(\overline{\nabla}z,\overline{\nabla}z)]\nonumber.
 \end{align}
 
 \medskip

The next result is an estimate for the first non-zero eigenvalue of the Drift Laplacian on closed submanifolds.
This result is a slight modification of Theorem 1.6 in \cite{huang} and it reads as follows.
\begin{proposition}\label{4}
 Let $M^{n+1}$ be a compact weighted Riemannian manifold 
with nonempty boundary $\partial M$ and $\Ric_f^k\geq0$. If the second 
fundamental form of $\partial M$ satisfies $II\geq cI$, in the quadratic form sense, and 
$H_f\geq\frac{k-1}{n}c$, then $$\lambda_1(\partial M)\geq (k-1)c^2,$$ where 
$\lambda_1$ is the first non-zero eigenvalue of the Drift Laplacian acting 
on functions on $\partial M$. The equality holds if and only if $M$ is 
isometric to an Euclidean ball of radius $\frac{1}{c}$, $f$ is constant and $k=n+1$.
\end{proposition}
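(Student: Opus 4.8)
The plan is to adapt Reilly's method to the drift setting, using the weighted Reilly inequality $(\ref{6})$ as the engine. First I would let $z$ be a first eigenfunction of $\overline{\Delta}_f$ on $\partial M$, so that $\overline{\Delta}_f z=-\lambda_1 z$, and extend it to $M$ by solving the drift-harmonic Dirichlet problem $\Delta_f u=0$ in $M$, $u|_{\partial M}=z$; put $h=\frac{\partial u}{\partial\nu}$. Feeding this $u$ into $(\ref{6})$ and using $\Delta_f u=0$ together with $\Ric_f^k\ge0$, the left-hand side is non-positive, so
\begin{equation*}
\int_{\partial M}\bigl[nH_f h^2+2h\,\overline{\Delta}_f z+II(\overline{\nabla}z,\overline{\nabla}z)\bigr]\le 0 .
\end{equation*}

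Next I would invoke the boundary hypotheses $H_f\ge\frac{k-1}{n}c$ and $II\ge cI$ to bound the integrand from below by $(k-1)c\,h^2+2h\,\overline{\Delta}_f z+c\,|\overline{\nabla}z|^2$, substitute $\overline{\Delta}_f z=-\lambda_1 z$, and integrate the gradient term by parts on the closed manifold $\partial M$, so that $\int_{\partial M}|\overline{\nabla}z|^2=\lambda_1\int_{\partial M}z^2$. This reduces everything to
\begin{equation*}
(k-1)c\int_{\partial M}h^2-2\lambda_1\int_{\partial M}hz+c\lambda_1\int_{\partial M}z^2\le 0 .
\end{equation*}
Writing $A^2=\int_{\partial M}h^2$ and $D^2=\int_{\partial M}z^2$, the weighted divergence theorem applied to $u\nabla u$ gives $\int_{\partial M}hz=\int_M|\nabla u|^2\ge0$, while $D>0$ (since $z\not\equiv0$) and $A>0$ (otherwise the inequality reads $c\lambda_1 D^2\le0$). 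Bounding $\int_{\partial M}hz\le AD$ by Cauchy--Schwarz turns the above into $(k-1)cA^2-2\lambda_1 AD+c\lambda_1 D^2\le0$; viewing this as a quadratic in $A$ with a real root, non-negativity of its discriminant is exactly $\lambda_1^2\ge(k-1)c^2\lambda_1$, i.e. $\lambda_1\ge(k-1)c^2$.

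For the equality statement I would trace equality back through every inequality used. Equality in the quadratic and in Cauchy--Schwarz forces $h=cz$ on $\partial M$; equality in the boundary bounds forces $II(\overline{\nabla}z,\overline{\nabla}z)=c|\overline{\nabla}z|^2$ and $H_f=\frac{k-1}{n}c$; and, crucially, equality in Proposition $\ref{9}$ holds pointwise on $M$, yielding $\Hess u=\frac{\Delta u}{n+1}\langle\,,\rangle$ and $\langle\nabla u,\nabla f\rangle=-\frac{k-n-1}{k}\Delta_f u=0$, as well as $\Ric_f^k(\nabla u,\nabla u)=0$. Since $\Delta_f u=0$ and $\langle\nabla u,\nabla f\rangle=0$ give $\Delta u=0$, this collapses to $\Hess u=0$, so $\nabla u$ is a nontrivial parallel vector field on $M$.

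The hard part is converting this rigidity into the geometric conclusion, and I expect it to be the main obstacle. My plan is an Obata--Reilly type argument: the parallel gradient field $\nabla u$ makes the splitting in the $\nabla u$ direction flat with its integral curves geodesics meeting $\partial M$, while the total umbilicity $II=cg$ with constant $c$ organises $\partial M$ as a round sphere of radius $1/c$, and the vanishing $\Ric_f^k(\nabla u,\nabla u)=0$ together with $\langle\nabla f,\nabla u\rangle=0$ forces the density to be trivial, i.e. $f$ constant and $k=n+1$. A cleaner route, which I would check against \cite{huang}, is to run the argument simultaneously for every first eigenfunction $z$: each produces a parallel gradient field, and if these span enough directions one obtains a full parallel frame, hence flatness of $M$, after which the round umbilic boundary identifies $M$ with the Euclidean ball of radius $1/c$.
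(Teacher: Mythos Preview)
Your proposal is correct and follows the same Reilly-type strategy as the paper: take a first eigenfunction $z$ on $\partial M$, extend it drift-harmonically to $M$, and feed the extension into $(\ref{6})$. The only difference is purely algebraic. Where you bound $\int_{\partial M}hz$ by Cauchy--Schwarz and then read off the discriminant of a quadratic in $A$, the paper completes the square directly inside the integrand:
\[
(k-1)c\,h^2-2\lambda_1 hz+c\lambda_1 z^2=(k-1)c\Bigl(h-\tfrac{\lambda_1}{(k-1)c}\,z\Bigr)^2+\lambda_1\Bigl(c-\tfrac{\lambda_1}{(k-1)c}\Bigr)z^2,
\]
so integrating immediately yields $\lambda_1\bigl(c-\tfrac{\lambda_1}{(k-1)c}\bigr)\int_{\partial M}z^2\le 0$ and hence $\lambda_1\ge(k-1)c^2$, with the pointwise relation $h=\tfrac{\lambda_1}{(k-1)c}z$ in the equality case coming for free, without an auxiliary Cauchy--Schwarz step. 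For the rigidity, the paper is actually terser than you are: it simply asserts that the equality case follows from Proposition~\ref{9} and ``a careful analysis in the equalities that occur'', so your outline (forcing $\Hess u=0$, hence a parallel gradient, and then invoking an Obata--Reilly argument) already goes beyond what the paper spells out.
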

\begin{proof}
 Let $z$ be an eigenfunction corresponding to the first non-zero eigenvalue $\lambda_1$ of 
the Drift Laplacian of $\partial M$, that is, 
\begin{equation}
 \overline{\Delta}_fz+\lambda_1z=0.
\end{equation}
 Let $u\in C^{\infty}(M)$ be the solution of the Dirichlet problem
$$\begin{cases}
 \Delta_fu=0\,\,\,\,\,&\text{ in } M,\\
 u=z\,\,\,\,\,&\text{ on }\partial M.
\end{cases}
$$
It then follows from $(\ref{6})$ and the non-negativity of $\Ric_f^k$ of $M$ that
\begin{align}
 0&\geq  \int_{\partial M}[nH_fh^2+2h\overline{\Delta}_fz+II(\overline{\nabla}z,\overline{\nabla}z)].
\end{align}
Since $II\geq cI$, we have 
$$II(\overline{\nabla}z,\overline{\nabla}z)\geq c |\overline{\nabla z}|^2,$$
and noticing that
$$\int_{\partial M}|\overline{\nabla z}|^2=-\int_{\partial M}z\overline{\Delta}z=\lambda_1\int_{\partial M}z^2,$$
we obtain
\begin{align*}
 0&\geq  \int_{\partial M}[nH_fh^2+2h\overline{\Delta}_fz+II(\overline{\nabla}z,\overline{\nabla}z)]\\
 &\geq\int_{\partial M}[(k-1)ch^2-2\lambda_1zh+c\lambda_1z^2]\\
 &=\int_{\partial M}\bigg[(k-1)c\left(h-\frac{\lambda_1z}{(k-1)c}\right)^2+\lambda_1\left(c-\frac{\lambda_1}{(k-1)c}\right)z^2\bigg]\\
 &\geq\lambda_1\left(c-\frac{\lambda_1}{(k-1)c}\right)\int_{\partial M}z^2.
\end{align*}
Consequently, $$\lambda_1\geq(k-1)c^2,$$
which proof the first part of theorem. The equality case follows by Proposition \ref{9} and a careful analysis in the equalities that occur. The converse is immediate.


\end{proof}

Recall the following version of Hopf boundary point lemma, see its proof  in \cite{gilbarg}, Lemma 3.4.
\begin{proposition}[Hopf boundary point lemma]\label{25}
 Let $(M^n,g)$ be a complete Riemannian manifold and let $\Omega\subset M$ be a closed domain. If $u:\Omega\rightarrow\mathbb R$ 
 is a function with $u\in C^2(\text{int}(\Omega))$ satisfying
 $$\Delta u+\langle X,\nabla u\rangle\geq0,$$
 where $X$ is a bounded vector field, $x_0\in\partial\Omega$ is a point where 
 $$u(x)<u(x_0)\,\,\,\forall x\in \Omega,$$
 $u$ is continuous at $x_0$, and $\Omega$ satisfies the interior sphere condition at $x_0$, then 
 $$\frac{\partial u}{\partial \nu}(x_0)>0$$
 if this outward normal derivative exists.
\end{proposition}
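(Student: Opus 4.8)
The plan is to adapt the classical barrier argument to the Riemannian setting, mirroring the proof in \cite{gilbarg}. First I would exploit the interior sphere condition at $x_0$: it furnishes a geodesic ball $B=B_R(y)\subset\Omega$ with $x_0\in\partial B\cap\partial\Omega$ whose outward unit normal at $x_0$ coincides with $\nu$. After shrinking $R$ if necessary, I may assume $R$ is smaller than the injectivity radius at $y$, so that the distance function $r(x)=d(x,y)$ is smooth on the closed annulus $\overline A$, where $A=B_R(y)\setminus\overline{B_\rho(y)}$ for a fixed $\rho\in(0,R)$. On this compact region $\Delta r$ is bounded, by the Laplacian comparison theorem, and $X$ is bounded by hypothesis.

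On $A$ I would introduce the barrier $w=e^{-\alpha r^2}-e^{-\alpha R^2}$, which vanishes on $\partial B_R$ and is positive inside. Using $|\nabla r|=1$, a direct computation gives
\[
 \Delta w+\langle X,\nabla w\rangle=e^{-\alpha r^2}\big[\,4\alpha^2 r^2-2\alpha-2\alpha r\,\Delta r-2\alpha r\,\langle X,\nabla r\rangle\,\big].
\]
Since $r\geq\rho>0$ on $A$ while $\Delta r$, $r$ and $|X|$ are bounded there, the term $4\alpha^2\rho^2$ dominates the bracket once $\alpha$ is taken large enough; hence $\Delta w+\langle X,\nabla w\rangle>0$ on $A$ for a suitable fixed $\alpha$.

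Next I would form the comparison function $g=u+\varepsilon w-u(x_0)$ for a small constant $\varepsilon>0$ to be chosen. On the outer sphere $\partial B_R$ one has $w=0$ and $u\leq u(x_0)$, so $g\leq0$ there, with equality exactly at $x_0$. The inner sphere $\partial B_\rho$ lies in $\mathrm{int}(\Omega)$, where the strict inequality $u<u(x_0)$ together with compactness yields a gap $u(x_0)-u\geq\delta>0$; choosing $\varepsilon$ so small that $\varepsilon\,\max_{\partial B_\rho}w\leq\delta$ gives $g\leq0$ on $\partial B_\rho$ as well. Because $\Delta u+\langle X,\nabla u\rangle\geq0$ by hypothesis and $\Delta w+\langle X,\nabla w\rangle>0$, the function $g$ satisfies $\Delta g+\langle X,\nabla g\rangle\geq0$ on $A$; as this operator has no zeroth-order term, the weak maximum principle forces $g\leq0$ throughout $\overline A$, the maximum value $0$ being attained at the boundary point $x_0$.

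Finally I would read off the boundary derivative. Since $g\leq0=g(x_0)$ on $\overline A$, the point $x_0$ is a boundary maximum of $g$, and comparing $g(x_0)$ with the values of $g$ along the inward geodesic from $x_0$ shows that the outward normal derivative, where it exists, satisfies $\partial g/\partial\nu(x_0)\geq0$. Because $w$ depends only on $r$ and $\nu=\nabla r$ at $x_0$, one computes $\partial w/\partial\nu(x_0)=-2\alpha R\,e^{-\alpha R^2}<0$, whence
\[
 \frac{\partial u}{\partial\nu}(x_0)\geq-\varepsilon\,\frac{\partial w}{\partial\nu}(x_0)=2\alpha R\,\varepsilon\,e^{-\alpha R^2}>0,
\]
which is the desired strict positivity. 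I expect the only genuinely delicate point to be the Riemannian one: guaranteeing that $r$ is smooth on $\overline A$, so that the barrier computation and the comparison bound on $\Delta r$ are legitimate. This is handled by shrinking the interior sphere below the injectivity radius at $y$, after which the classical argument transfers essentially verbatim.
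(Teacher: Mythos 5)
Your proposal is correct and is essentially the paper's own proof: the paper does not argue this proposition itself but defers to Lemma 3.4 of Gilbarg--Trudinger, whose classical barrier construction on an annulus you reproduce, with the standard Riemannian adaptation (re-centering the tangent ball below the injectivity radius so that $r$ is smooth and, as a by-product, the shrunk ball meets $\partial\Omega$ only at $x_0$). The only glossed detail is that shrinking $R$ requires moving the center along the minimizing geodesic from $y$ to $x_0$ to keep the tangency at $x_0$, and that $\Delta r$ is bounded simply by continuity on the compact annulus (no comparison theorem is needed); both are routine.
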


%

\section{proof of the sharp bounds for the Stekloff eigenvalues}

In this section we will give the proof of the four first results announced in introduction and for this we will use all tools presented in the preliminaries.

\medskip

\noindent{ \bf Proof of Theorem \ref{3}.} Let $u$ be the solution of the following problem
$$\begin{cases}
 \Delta_fu=0\,\,\,\,&\text{in } M,\\
 u|_{\partial M}=z,&
\end{cases}
$$
where $z$ is a first eigenfunction on $\partial M$ corresponding to $\lambda_1$, that is, $z$ satisfies $\overline{\Delta}_fz+\lambda_1z=0$ on $\partial M$. Set 
$h=\frac{\partial u}{\partial \nu}\big|_{\partial M}$, then we have from the Rayleigh inequality that (cf.\cite{kuttler})
\begin{align}
 p_1&\leq\frac{\int_{\partial M} h^2}{\int_M|\nabla u|^2}\label{7}
 \intertext{and}
 p_1&\leq\frac{\int_M|\nabla u|^2}{\int_{\partial M} z^2}\label{8}
 \end{align}
Notice that $(\ref{8})$ it is the variational principle, and $(\ref{7})$ it is obtained as follows, 
\begin{align*}
 p_1&\leq \frac{\int_M|\nabla u|^2}{\int_{\partial M}z^2}=\frac{-\int_M u\Delta_fu+
 \int_{\partial M} u\langle\nabla u,\nu\rangle}{\int_{\partial M}z^2}\\
 &=\frac{\int_M|\nabla u|^2}{\int_{\partial M}z^2}\cdot\frac{\int_{\partial M} u\langle\nabla u,\nu\rangle}{\int_M|\nabla u|^2}\\
 &=\frac{1}{\int_{\partial M}z^2}\cdot\frac{\left(\int_{\partial M} u\langle\nabla u,\nu\rangle\right)^2}{\int_M|\nabla u|^2}\\
 &\leq \frac{\int_{\partial M}z^2}{\int_{\partial M}z^2}\cdot\frac{\int_{\partial M}
 \langle\nabla u,\nu\rangle^2}{\int_M|\nabla u|^2}\\
 &=\frac{\int_{\partial M} h^2}{\int_M|\nabla u|^2},
\end{align*}
 which gives
 \begin{equation}\label{5} 
 p_1^2\leq\frac{ \int_{\partial M} h^2}{ \int_{\partial M} z^2}.
 \end{equation}

It then follows by substituting $u$ into the equation $(\ref{1})$, and using 
the { Proposition \ref{9}}, that 
 \begin{align} \label{351}
  0\geq\int_M\frac{k-1}{k}[(\Delta_fu)^2-&\Ric_f^k(\nabla u,\nabla u)]\geq\\
  &\geq\int_{\partial M}[nH_fh^2+2h\overline{\Delta}_fz+II(\overline{\nabla}z,\overline{\nabla}z)]\nonumber\\
  &\geq\int_{\partial M}[(k-1)ch^2-2\lambda_1z+c|\overline{\nabla}z|^2].\nonumber
 \end{align}
Note that, by Green's formula, 
$$\int_{\partial M}|\overline{\nabla}z|^2=\int_{\partial M}\langle \overline{\nabla}z,\overline{\nabla}z\rangle=
-\int_{\partial M}z\overline{\Delta}_fz=\lambda_1\int_{\partial M}z^2.$$
Putting this expression in (\ref{351}) we have that
\begin{align*}
 0 &\geq(k-1)c\int_{\partial M}h^2-2\lambda_1\int_{\partial M}hz+c\lambda_1\int_{\partial M}z^2\\
 &\geq (k-1)c\int_{\partial M}h^2-2\lambda_1\left(\int_{\partial M}h^2\right)^{\frac{1}{2}}
 \left(\int_{\partial M}z^2\right)^{\frac{1}{2}}+c\lambda_1\int_{\partial M}z^2\\
 &=\frac{(k-1)c^2-\lambda_1}{c}\int_{\partial M}h^2+\left[\sqrt{\frac{\lambda_1}{c}}\left(\int_{\partial M}h^2\right)^{\frac{1}{2}}-
 \sqrt{c\lambda_1}\left(\int_{\partial M}z^2\right)^{\frac{1}{2}}\right]^2,
\end{align*}
from where

$$\frac{\sqrt{\lambda_1-(k-1)c^2}}{\sqrt{c}}\left(\int_{\partial M}h^2\right)^{\frac{1}{2}}\geq\sqrt{\frac{\lambda_1}{c}}
\left(\int_{\partial M}h^2\right)^{\frac{1}{2}}-\sqrt{c\lambda_1}\left(\int_{\partial M}z^2\right)^{\frac{1}{2}}$$
and
$$\frac{\sqrt{\lambda_1}-\sqrt{\lambda_1-(k-1)c^2}}{\sqrt{c}}\left(\int_{\partial M}h^2\right)^{\frac{1}{2}}\leq
\sqrt{c\lambda_1}\left(\int_{\partial M}z^2\right)^{\frac{1}{2}},$$
that is,
\begin{align*}
\left(\int_{\partial M}h^2\right)^{\frac{1}{2}}&\leq \frac{c\sqrt{\lambda_1}}{\sqrt{\lambda_1}-
\sqrt{\lambda_1-(k-1)c^2}}\left(\int_{\partial M}z^2\right)^{\frac{1}{2}}\\
&=\frac{\sqrt{\lambda_1}}{(k-1)c}(\sqrt{\lambda_1}+\sqrt{\lambda_1-(k-1)c^2})\left(\int_{\partial M}z^2\right)^{\frac{1}{2}}.
\end{align*}
Using $(\ref{5})$, we obtain
\begin{align*}
p_1&\leq \frac{\sqrt{\lambda_1}}{(k-1)c}\left(\sqrt{\lambda_1}+\sqrt{\lambda_1-(k-1)c^2}\right).\\
\end{align*}

Now, assume that
 $$p_1= \frac{\sqrt{\lambda_1}}{(k-1)c}\left(\sqrt{\lambda_1}+\sqrt{\lambda_1-(k-1)c^2}\right).$$
So, we also have that
$$\left(\int_{\partial M}h^2\right)^{\frac{1}{2}}= \frac{\sqrt{\lambda_1}}{(k-1)c}\left(\sqrt{\lambda_1}+\sqrt{\lambda_1-(k-1)c^2}\right)\left(\int_{\partial M}z^2\right)^{\frac{1}{2}}\vspace{3mm}$$
and all inequalities above become equality. Thus $h=\alpha z$ and 
$$\alpha=\frac{\left(\alpha^2\int_{\partial M}z^2\right)^{\frac{1}{2}}}{\left(\int_{\partial M}z^2\right)^{\frac{1}{2}}}=
 \frac{\sqrt{\lambda_1}}{(k-1)c}\left(\sqrt{\lambda_1}+\sqrt{\lambda_1-(k-1)c^2}\right),$$
that is, 
$$h= \frac{\sqrt{\lambda_1}}{(k-1)c}(\sqrt{\lambda_1}+\sqrt{\lambda_1-(k-1)c^2})z.$$
Furthermore we infer, by Proposition \ref{9},  that $\Hess u=0$. Now, on the boundary $\partial M$, we can write 
\begin{align*}\nabla 
u&=(\nabla u)^\top+(\nabla u)^{\perp}\\
&=(\nabla u)^\top+\langle\nabla u,\nu\rangle\nu,
\end{align*}
where 
$(\nabla u)^\top$ is tangent to $\partial M$ and $(\nabla 
u)^{\perp}$ is normal to $\partial M$. Then, take a local 
orthonormal fields $\{e_i\}_{i=1}^{n}$ tangent to $\partial M$. We obtain
\begin{align*}
 0&=\sum_{i=1}^{n}\Hess u(e_i,e_i)=\sum_{i=1}^{n}\langle\nabla_{e_i}\nabla 
u,e_i\rangle\\
&=\sum_{i=1}^{n}\langle\nabla_{e_i}[(\nabla u)^\top+\langle\nabla u,\nu\rangle\nu],e_i\rangle\\
&=\sum_{i=1}^{n}\langle\nabla_{e_i}(\nabla u)^\top+\langle\nabla u,\nu\rangle\nabla_{e_i}\nu+e_i(\langle\nabla 
u,\nu\rangle)\nu,e_i\rangle\\
&=\overline{\Delta}z+\sum_{i=1}^{n}\langle\nabla 
u,\nu\rangle \, II(e_i,e_i)\\
&=\overline{\Delta}z+nHh\\
&=\overline{\Delta}_fz - f_\nu h+nHh\\
&==\overline{\Delta}_fz +nH_fh\\
&=-\lambda_1z+c(k-1)h\\
&=-\lambda_1z+c(k-1) \frac{\sqrt{\lambda_1}}{(k-1)c}(\sqrt{\lambda_1}+\sqrt{\lambda_1-(k-1)c^2})z,
\end{align*}
from where
$$\lambda_1=(k-1)c^2.$$

Therefore, follows by Proposition \ref{4} that $M$ is isometric to an $(n+1)$-dimensional Euclidean ball of radius $\frac{1}{c}$, $f$ is constant and so $k=n+1$. The converse follows the ideas of the Riemannian case.
\qed
\medskip

{\bf Proof of Theorem \ref{12}.} Let $w$ be an eigenfunction corresponding to 
the first eigenvalue $q_1$ of problem $(\ref{2})$,
that is,
 \begin{equation}
 \begin{cases}
   \Delta_f^2w=0\,\,\,\,\,\,&\text{ in } M,\\
  w=\Delta_fw-q_1 \frac{\partial w}{\partial \nu}=0\,\,\,\,\,\, &\text{ on } \partial M.
  \end{cases}
 \end{equation}
 Set $\eta=\frac{\partial w}{\partial\nu}|_{\partial M}$; then
 by divergence theorem we obtain
 \begin{align*}
  \int_M(\Delta_fw)^2&=-\int_{M}\langle\nabla(\Delta_fw),\nabla w\rangle+\int_{\partial M}\Delta_fw\, \langle\nabla w,\nu\rangle\\
  &=\int_{M}w\, \Delta_f(\Delta_fw)-\int_{\partial M}w\, \langle\nabla(\Delta_fw),\nu\rangle+\int_{\partial M}\Delta_fw \,\langle\nabla w,
  \nu\rangle\\
  &=q_1\int_{\partial M}\eta^2,
 \end{align*}
that is, 
$$q_1=\frac{ \int_M(\Delta_fw)^2}{\int_{\partial M}\eta^2}.$$
 
Substituting $w$ in $(\ref{6})$, and noting that $w|_{\partial M}=z$, we have
\begin{align*}
 \frac{k-1}{k}\int_{M}(\Delta_fw)^2&\geq\int_M\Ric_f^k(\nabla w,\nabla w)+\int_{\partial M}nH_f\eta^2\nonumber\\
 &\geq \frac{(k-1)nc}{k}\int_{\partial M}\eta^2,
\end{align*}
from where $q_1\geq nc,$ as we desired.\vspace{2mm}

Assume now that $q_1=nc$, then the inequalities above become equalities
and consequently $H_f=\frac{k-1}{k}c$. 
Furthermore, we have equality in the Proposition \ref{9}, 
thus $\Hess w=\frac{\Delta w}{n+1}\langle\,,\rangle$ and
$\Delta_fw=\frac{k}{n+1}\Delta w$.

 Take an orthonormal frame $\{e_1,\ldots,e_n,e_{n+1}\}$ on $M$ 
such that when restricted to $\partial M,\,e_{n+1}=\nu$. Since $w|_{\partial M}=0$ we have
\begin{align*}
e_i(\eta)&=e_i\langle\nabla w,\nu\rangle\\
&=\langle\nabla_{e_i}\nabla w,\nu\rangle+\langle\nabla w,\nabla_{e_i}\nu\rangle\\
&=\Hess w(e_i,\nu)+II((\nabla w)^\top,e_i)=0,
\end{align*}
that is, $\eta=\rho=$ constant, and so $(\Delta_fw)|_{\partial M}=q_1\eta=nc\rho$ 
is also a constant. Using the fact that $\Delta_fw$ is a $f$-harmonic
function on $M$, we conclude by maximum principle that $\Delta_fw$ is constant on $M$. 
Since $\Delta_fw=\frac{k}{n+1}\Delta w$, then $w$ satisfies 
$$\begin{cases}
   \Hess w=\frac{\Delta_f w}{k}\langle\,,\rangle\hspace{3mm} \text{in} \hspace{3mm} M,\\
   w|_{\partial M}=0.
  \end{cases}$$
  
 Thus, by Lema 3 in \cite{robert}, we conclude that $M$ is isometric to a ball in $\mathbb R^{n+1}$ of radius $c^{-1}.$ Now, using the hessian of $w$ is possible see that $w=\frac{\lambda}{2}r^2 + C,$ where $\lambda=\frac{\Delta_f w}{k}$ and $r$ is the distance function from its minimal point, see \cite{robert} for more details for this technique.

Lastly, we will show that $f$ is constant. In fact, if $k> n+1$, then $\langle\nabla f,\nabla w\rangle$ is constant and so $f=-(k-n-1)\ln r +C$. It is a contradiction, since $f$ is a smooth function.
  
\qed 
 
 \bigskip
\noindent{\bf Proof of Theorem \ref{17}.} 
Now, let $w$ be the solution of the following Drift Laplace equation
\begin{equation}
 \begin{cases}
  \Delta_f w=1\,\,\,\,\text{ in } M,\\
  w|_{\partial M}=0.
 \end{cases}
\end{equation}
Follows from Rayleigh characterization of $q_1$ that
\begin{equation}
 q_1\leq \frac{\int_M|\nabla w|^2}{\int_{\partial M}\eta^2}=\frac{\int_M(\Delta_fw)^2}{\int_{\partial M}\eta^2}=
 \frac{V}{\int_{\partial M}\eta^2},
\end{equation}
where $\eta=\frac{\partial w}{\partial\nu}\big|_{\partial M}$. Integrating $\Delta_f w=1$ on $M$ and using the divergence theorem,
it gives 
$$V=\int_{\partial M}\eta.$$
 Hence we infer from Schwarz inequality that 
 \begin{equation}\label{18}
 V^2\leq A\int_{\partial M}\eta^2. 
 \end{equation}

 Consequently, 
 $$q_1\leq\frac{V}{\int_{\partial M}\eta^2}\leq\frac{V}{V^2/A}=\frac{A}{V}.$$
 
 Assume now that $\Ric_f^k\geq0$, $H_f(x_0)\geq\frac{(k-1)A}{k\, n\, V}$ for some $x_0\in \partial M$ and $q_1=\frac{A}{V}$. In this
 case $(\ref{18})$ become a equality and so $\eta=\frac{V}{A}$ is a constant. Consider the function $\phi$ on $M$ given by
 $$\phi=\frac{1}{2}|\nabla w|^2-\frac{w}{k}.$$
 Using the Bochner formula $(\ref{16})$, $\Delta_fw=1$, the {\ Proposition \ref{9}} and that $\Ric_f^k\geq0$, we have that
 \begin{align}\label{28} 
\frac{1}{2}\Delta_f\phi&=|\Hess w|^2+\langle \nabla w,\nabla(\Delta_f w)\rangle+\Ric_f(\nabla w,\nabla w)-\frac{1}{k}\\
&\geq\frac{1}{k}(\Delta_fw)^2-\frac{1}{k}=0.\nonumber
\end{align}
Thus $\phi$ is $f$-subharmonic. Observe that $\phi=\frac{1}{2}\left(\frac{V}{A}\right)^2$ on the boundary. In fact, if we write $\nabla w=(\nabla w)^\top+(\nabla w)^{\perp}$, where $(\nabla w)^\top$ is tangent to $\partial M$ and $(\nabla w)^{\perp}$ is normal to $\partial M$, and since $w|_{\partial M}=0$, it follows that $\nabla w=(\nabla w)^{\perp}=C\nu$ on $\partial M$. On the other hand,
$$1=\Delta_fw=q_1\langle\nabla w,\nu\rangle=\frac{A}{V}C\,\,\,\mbox{implies} \,\,\,C=\frac{V}{A}\,\,\,\text{ and }\,\,\,|\nabla w|=\frac{V}{A}.$$
Therefore $\phi=\frac{1}{2}\left(\frac{V}{A}\right)^2$ on the boundary, and so we conclude by Proposition \ref{25} that either
\begin{align}
 \phi=\frac{1}{2}\left(\frac{V}{A}\right)^2\,\,\,\,\,\text{ in } M\label{26}
 \intertext{or}
 \frac{\partial\phi}{\partial\nu}(y)>0,\,\,\,\,\, \forall \, y\in\partial M.\label{27}
\end{align}
From $w|_{\partial M}=0$, we have
\begin{align*}
 1=(\Delta_fw)|_{\partial M}&=nH\eta+\Hess w(\nu,\nu)-\frac{V}{A}\langle \nabla f,\nu\rangle\\
 &=\frac{nV}{A}\left(H_f+\frac{\langle\nabla f,\nu\rangle}{n}\right)+\Hess w(\nu,\nu)-\frac{V}{A}\langle \nabla f,\nu\rangle\\
 &=\frac{nV}{A}H_f+\Hess w(\nu,\nu).
\end{align*}
Hence it holds on $\partial M$ that 
\begin{align*}
 \frac{\partial\phi}{\partial\nu}&=\frac{V}{A}\Hess w(\nu,\nu)-\frac{V}{k\, A}\\
 &=\frac{V}{A}\left(1-\frac{nV}{A}H_f\right)-\frac{V}{k\, A}\\
 &=n\frac{V}{A}\left(\frac{k-1}{k\, n}-H_f\frac{V}{A}\right),
\end{align*}
which shows that $(\ref{27})$ is not true since $H_f(x_0)\geq\frac{(k-1)A}{k\, n\, V}$. Therefore $\phi$ is constant on $M$. Since the Drift Laplacian of $\phi$ vanishes, we infer that equality must hold in $(\ref{28})$ and that give us equality in the { Proposition \ref{9}}, and consequently $1=\Delta_fw=\frac{k}{n+1}\Delta w$ and $\Hess w=\frac{\Delta w}{n+1}\langle\,,\rangle$.   The remainder of the proof follows a similar arguments  as in proof  of Theorem \ref{12}.

\medskip\qed

\noindent{\bf Proof of Theorem \ref{191}.} Let $w$ be an eigenfunction corresponding to the first eigenvalue $q_1$ of the problem $(\ref{15})$:
  \begin{equation} 
  \begin{cases}
   \Delta_f^2u=0\,\,\,\,&\text{ in } M,\\
   u=\frac{\partial^2u}{\partial \nu^2}-q\frac{\partial u}{\partial \nu}=0\,\,\,\,&\text{ on } \partial M.
  \end{cases}
 \end{equation}
 Observe that $w$ is not a constant. Otherwise, we would conclude from $w|_{\partial M}=0$ that $w\equiv0$. Set
 $\eta=\frac{\partial w}{\partial \nu}|_{\partial M}$; then $\eta\neq0$. In fact, if $\eta=0$ then
 $$w|_{\partial M}=(\nabla w)|_{\partial M}=\frac{\partial^2w}{\partial\nu^2}=0$$
 which implies that $(\Delta_fw)|_{\partial M}=0$ and so $\Delta_fw=0$ on $M$ by the maximum principal, which in turn implies 
 that $w=0$. This is a contradiction.
\medskip

Since $w|_{\partial M}=0$, we have by the divergence theorem that
\begin{equation}
\int_M\langle\nabla w,\nabla(\Delta_fw)\rangle=-\int_Mw\Delta_f^2w=0,
\end{equation}
hence
\begin{equation}\label{36}
 \int_{\partial M}\Delta_fw\, \frac{\partial w}{\partial 
\nu}=\int_M\langle\nabla(\Delta_f w),\nabla 
w\rangle+\int_M(\Delta_fw)^2=\int_M(\Delta_fw)^2.
\end{equation}
Since $w|_{\partial M}=0$, we have $\nabla w=\frac{\partial w}{\partial \nu}\nu$ and
\begin{align}\label{39}
 (\Delta_fw)|_{\partial 
M}&=\frac{\partial^2w}{\partial\nu^2}+nH\frac{\partial 
w}{\partial\nu}-\langle\nabla f,\nabla w\rangle\\
&=q_1\frac{\partial w}{\partial\nu}+nH_f\frac{\partial 
w}{\partial\nu}+\langle\nabla f,\nu\rangle\frac{\partial 
w}{\partial\nu}-\langle\nabla f,\nu\rangle\frac{\partial 
w}{\partial\nu}\nonumber\\
&=q_1\frac{\partial w}{\partial\nu}+nH_f\frac{\partial 
w}{\partial\nu}\nonumber
\end{align}
using $(\ref{36})$ and $(\ref{39})$ we obtain that
\begin{align*}
 q_1&=\frac{\int_M(\Delta_f w)^2-n\int_{\partial 
M}H_f\eta^2}{\int_{\partial M}\eta^2}.
\end{align*}
On the other hand, substituting $w$ into $(\ref{6})$, we obtain
\begin{align}\label{38}
\frac{k-1}{k}\int_M(\Delta_fw)^2&=\int_M\Ric_f^k(\nabla 
w,\nabla w)+\int_{\partial M}nH_f\eta^2\\
&\geq\int_{\partial M}nH_f\eta^2,\nonumber
\end{align}
that is,
$$\int_M(\Delta_fw)^2 - \int_{\partial M}nH_f\eta^2\geq \dfrac{n}{k-1}\int_{\partial M}H_f\eta^2\geq c\int_{\partial M}\eta^2.$$
By expression for $q_1$ and estimate above, we obtain the desired estimate
\begin{equation}\label{37}
 q_1\geq c.
\end{equation}
Assume now that $q_1=c$. So all inequalities in $(\ref{38})$  become equalities. Thus, by Proposition 
\ref{9}, we have that 
\begin{equation}\label{40}
 \Hess w=\frac{\Delta w}{n+1}\langle\,,\rangle\hspace{5mm}\text{ and }\hspace{5mm}\Delta_f w=-\frac{k}{k-n-1}\langle\nabla f,\nabla w\rangle.
\end{equation}
Choice an orthonormal frame $\{e_1,\ldots, e_n\}$ on $M$ so that restricted to $\partial M,\,e_n=\nu$. On the other side, to 
$i=1,\ldots,n-1$, using that $w|_{\partial M}=0$, we obtain
\begin{align*}
 0=\Hess w(e_i,e_n)&=e_ie_n(w)-\nabla_{e_i}e_n(w)\\
 &=e_i(\eta)-\langle\nabla_{e_i}e_n,e_n\rangle\eta=e_i(\eta),
\end{align*}
follow that $\eta=b_0=const.$ Since  $(\ref{37})$ takes equality and $\eta$ is constant, we conclude that $H_f = \frac{k-1}{n}c$, which implies 
from $(\ref{39})$ that $(\Delta_f w)|_{\partial M}=kcb_0$, therefore, by maximum principle $\Delta_fw$ is constant on $M$ which implies from 
$(\ref{40})$ that $\Delta w$ is constant on $M$. The remainder of the proof follows a similar arguments  as in proof  of Theorem \ref{12}.


\qed

\section{Escobar type theorem for the Stekloff problem}
 
Recall the Bochner type formula for weighted Riemannian manifold, which says: Any smooth function $u$ on $M$ holds that
\begin{equation}\label{16}
\frac{1}{2}\Delta_f|\nabla u|^2=|\Hess u|^2+\langle \nabla u,\nabla(\Delta_f u)\rangle+\Ric_f(\nabla u,\nabla u). 
\end{equation}

An immediate consequence of the Bochner type formula is the result below, however we believe that this is not a sharp estimate.

\begin{theorem}
 Let $M^{n+1},\,n\geq2$ be a compact weighted Riemannian manifold with boundary 
$\partial M$. Assume that $\Ric_f\geq 0, H_f\geq 0$ and that the second fundamental form 
satisfies $II\geq cI$ on $\partial M,\,c>0$. Then 
$$p_1>\frac{c}{2}.$$
\end{theorem}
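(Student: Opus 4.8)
The plan is to feed a first Stekloff eigenfunction directly into the Reilly-type identity $(\ref{1})$ and exploit that the eigenfunction is $f$-harmonic, so that no appeal to the sharper $k$-Ricci Proposition $\ref{9}$ is needed (the hypothesis here is $\Ric_f\ge 0$, which is exactly the curvature term appearing in $(\ref{1})$). Concretely, let $u$ solve $(\ref{steklov})$ with eigenvalue $p_1$, and set $z=u|_{\partial M}$ and $h=\frac{\partial u}{\partial\nu}\big|_{\partial M}$, so the boundary condition reads $h=p_1 z$. Since $\Delta_f u=0$, the left-hand side of $(\ref{1})$ collapses to $-\int_M[|\Hess u|^2+\Ric_f(\nabla u,\nabla u)]$, which is $\le 0$ by $\Ric_f\ge 0$. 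Hence the boundary integral is nonpositive:
\[
\int_{\partial M}\big[nH_f h^2+2h\,\overline{\Delta}_f z+II(\overline{\nabla}z,\overline{\nabla}z)\big]\le 0.
\]

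Next I would substitute $h=p_1 z$. Integrating by parts on the closed manifold $\partial M$ with respect to the weighted measure (exactly the identity $\int_{\partial M}|\overline{\nabla}z|^2=-\int_{\partial M}z\,\overline{\Delta}_f z$ already used in the proof of Proposition $\ref{4}$) turns the cross term into $-2p_1\int_{\partial M}|\overline{\nabla}z|^2$. Discarding the nonnegative term $np_1^2\int_{\partial M}H_f z^2$ via $H_f\ge 0$, and bounding $II(\overline{\nabla}z,\overline{\nabla}z)\ge c|\overline{\nabla}z|^2$, the whole inequality reduces to
\[
(c-2p_1)\int_{\partial M}|\overline{\nabla}z|^2\le 0 .
\]
Because a first nonzero Stekloff eigenfunction is $L^2(\partial M)$-orthogonal to the constants (equivalently, $\int_{\partial M}h=\int_M\Delta_f u=0$ forces $z$ to have zero weighted mean), $z$ is non-constant, so $\int_{\partial M}|\overline{\nabla}z|^2>0$ and therefore $p_1\ge c/2$.

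The hard part will be upgrading this to the strict inequality $p_1>c/2$, so I would treat the equality case $p_1=c/2$ and derive a contradiction from the strict convexity $c>0$. Equality forces every inequality above to be sharp: the interior term gives $\int_M[|\Hess u|^2+\Ric_f(\nabla u,\nabla u)]=0$, hence $\Hess u\equiv 0$, while the boundary term gives $II(\overline{\nabla}z,\overline{\nabla}z)=c|\overline{\nabla}z|^2$ pointwise. Using the splitting $\nabla u=\overline{\nabla}z+h\nu$ on $\partial M$ and the standard boundary identity $\Hess u(\overline{\nabla}z,\nu)=\overline{\nabla}z(h)-II(\overline{\nabla}z,\overline{\nabla}z)$, the vanishing of the Hessian together with $h=p_1 z$ yields $0=p_1|\overline{\nabla}z|^2-II(\overline{\nabla}z,\overline{\nabla}z)=(p_1-c)|\overline{\nabla}z|^2$ on $\partial M$; since $p_1=c/2\ne c$, this forces $|\overline{\nabla}z|\equiv 0$, i.e.\ $z$ constant, contradicting the non-constancy established above. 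I expect the delicate points to be (i) making sure equality genuinely propagates through all three inequalities, including the interior Bochner/$\Hess$ term, and (ii) the boundary computation of $\Hess u(\overline{\nabla}z,\nu)$ that converts the rigidity into the forbidden conclusion $z\equiv\text{const}$; the dimension hypothesis $n\ge 2$ does not appear essential to the estimate itself, so I would keep it only as a standing assumption.
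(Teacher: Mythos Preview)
Your approach is essentially the paper's: plug a first Stekloff eigenfunction into the Reilly identity $(\ref{1})$, use $\Delta_f u=0$ and $\Ric_f\ge 0$ to make the left side nonpositive, substitute $h=p_1 z$, integrate the cross term by parts, drop $nH_f h^2\ge 0$, and bound $II(\overline{\nabla}z,\overline{\nabla}z)\ge c|\overline{\nabla}z|^2$ to arrive at $(c-2p_1)\int_{\partial M}|\overline{\nabla}z|^2\le 0$; the non-constancy of $z$ then gives $p_1\ge c/2$.

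The one substantive difference is the strictness. The paper simply writes $0>-\int_M|\Hess u|^2$ at the outset and carries the strict sign through, without arguing why $\Hess u\not\equiv 0$. Your treatment of the equality case---using $\Hess u\equiv 0$ together with the boundary identity $\Hess u(\overline{\nabla}z,\nu)=\overline{\nabla}z(h)-II(\overline{\nabla}z,\overline{\nabla}z)$ to force $p_1|\overline{\nabla}z|^2=II(\overline{\nabla}z,\overline{\nabla}z)\ge c|\overline{\nabla}z|^2$, whence $|\overline{\nabla}z|\equiv 0$ since $p_1=c/2<c$---actually supplies the missing justification, and is a genuine improvement over the paper's presentation. (Note you do not even need the pointwise equality $II(\overline{\nabla}z,\overline{\nabla}z)=c|\overline{\nabla}z|^2$ here; $\Hess u\equiv 0$ alone already yields the contradiction via the inequality $II\ge cI$.)
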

\begin{proof}
 Set $h=\frac{\partial u}{\partial \nu}$, and $z=u|_{\partial M}$  where $u$ is solution of problem $(\ref{steklov})$. We have $p_1z=p_1u=h$, thus 
 $p_1\overline{\nabla}z=\overline{\nabla}h$. By $(\ref{1})$, we have
 \begin{align*}
  0>-\int_M|\Hess u|^2 &\geq\int_M[(\Delta_f u)^2-|\Hess u|^2- \Ric_f(\nabla u,\nabla u)]\\
&= \int_{\partial M}\left[nH_fh^2+2h\overline{\Delta}_fz+II(\overline{\nabla}z,\overline{\nabla}z)\right]\\
&\geq -2\int_{\partial M}\langle\overline{\nabla}h,\overline{\nabla}z\rangle+c\int_{\partial M}|\overline{\nabla}z|^2\\
&\geq -2p_1\int_{\partial M}|\overline{\nabla}z|^2+c\int_{\partial M}|\overline{\nabla}z|^2
 \end{align*}
Note that
$$\int_{\partial M}|\overline{\nabla}z|^2>0.$$
Otherwise $z$ is constant on the Boundary and hence $f$ is constant on $M$ which is a contradiction. Thus $p_1>\frac{c}{2}$.
\end{proof}


\bigskip

Below we present the proof of a sharp estimate of the non-zero first Stekloff eigenvalue on surfaces. The technique was introduced by Escobar in \cite{escobar}, and  just enable us to attack this problem in context of surfaces. 

\medskip

{\bf Proof of Theorem \ref{Escobar}.} 
Let $\phi$ be a non-constant eigenfunction for the Stekloff problem $(\ref{steklov})$. Consider the function $v=\frac{1}{2}|\nabla \phi|^2$,
 then by $(\ref{16})$
 \begin{align*}
  \Delta_fv=|\Hess \phi|^2+\langle \nabla \phi,\nabla(\Delta_f \phi)\rangle+\Ric_f(\nabla \phi,\nabla \phi).
 \end{align*}
Since $\phi$ is a $f$-harmonic function and $\Ric_f\geq0$ we find that 
\begin{equation}\label{33}
  \Delta_fv=|\Hess \phi|^2+\Ric_f(\nabla \phi,\nabla \phi)\geq0.
 \end{equation}
Therefore the maximum of $v$ is achieved
at some point $P\in\partial M$. The {Proposition \ref{25}} implies that
$(\partial v/\partial\eta)(P)>0$ or $v$ is identically constant.\vspace{5mm}\\
Let's assume  $(\partial v/\partial\eta)(P)>0$ and let $(t,x)$ be Fermi coordinates around the point $P$, that is, $x$ represents
a point on the curve $\partial M$ and $t$ represents the distance to the boundary point $x$. The metric  has the form
\begin{equation}\label{19}
 ds^2=dt^2+h^2(t,x)dx^2,
\end{equation}
where $h(P)=1,\,(\partial h/\partial x)(P)=0$. Thus
$$|\nabla \phi|^2=\left(\frac{\partial \phi}{\partial t}\right)^2+h^{-2}\left(\frac{\partial \phi}{\partial x}\right)^2,$$
and 
$$\frac{\partial v}{\partial x}=\frac{\partial \phi}{\partial t}\frac{\partial^2\phi }{\partial x\partial t}+h^{-2}
\frac{\partial \phi}{\partial x}\frac{\partial^2\phi }{\partial 
x^2}-h^{-3}\frac{\partial h}{\partial x}\left(\frac{\partial \phi}
{\partial x}\right)^2.$$
Evaluating at the point $P$ we obtain 
\begin{equation}\label{24} 
\frac{\partial v}{\partial x}(P)=\frac{\partial \phi}{\partial t}\frac{\partial^2\phi }{\partial x\partial t}+\frac{\partial \phi}
{\partial x}\frac{\partial^2\phi }{\partial x^2}=0.
\end{equation}
The $f$-Laplacian with respect to the metric given by $(\ref{19})$ in Fermi coordinates $(t,x)$ is
$$\Delta_f=\frac{\partial^2}{\partial t^2}+h^{-1}\frac{\partial h}{\partial t}\frac{\partial }{\partial t}+h^{-1}\frac{\partial }
{\partial x}\left(h^{-1}\frac{\partial }{\partial x}\right)-\frac{\partial f}{\partial t}\frac{\partial }{\partial t}-
h^{-2}\frac{\partial f}{\partial x}\frac{\partial }{\partial x}.$$
The geodesic curvature of $\partial M$ can be calculated in terms of the function $f$ and its first derivative as follows:
\begin{align}\label{20}
 k_g&=-\left\langle \nabla_{\partial/\partial x}\frac{\partial}{\partial t},\frac{\partial}{\partial x}\right\rangle=-\left\langle \nabla_{\partial/\partial t}\frac{\partial}{\partial x},\frac{\partial}{\partial x}\right\rangle\nonumber\\
 &=-\frac{1}{2}\frac{\partial}{\partial t}\left\langle \frac{\partial}{\partial x},\frac{\partial}{\partial x}\right\rangle=-\frac{1}{2} \frac{\partial}{\partial t}(h^2)=-hh^{\prime}.
\end{align}
Hence at $P$ we find that
\begin{equation}\label{21}0=\Delta_f\phi=\frac{\partial^2 \phi}{\partial t^2}-k_g\frac{\partial \phi}{\partial t}+\frac{\partial^2 \phi }{\partial x}-\frac{\partial f}{\partial t}\frac{\partial \phi}{\partial t}-
\frac{\partial f}{\partial x}\frac{\partial \phi}{\partial x}.
\end{equation}
Using the equality $(\ref{20})$ we get that
\begin{equation}\label{22}
\frac{\partial v}{\partial t}(P)=\frac{\partial \phi}{\partial t}\frac{\partial^2\phi }{\partial t^2}+\frac{\partial \phi}{\partial x}\frac{\partial^2\phi}{\partial t\partial x}+k_g\left(\frac{\partial \phi}{\partial x}\right)^2.
\end{equation}
Multiplying the equation $(\ref{21})$ by $-\frac{\partial \phi}{\partial t}$ and adding with the equation 
$(\ref{22})$ we obtain 
\begin{equation}\label{23}
\frac{\partial v}{\partial t}(P)=k_g|\nabla \phi|^2-\frac{\partial \phi}{\partial t}\frac{\partial^2\phi }{\partial x^2}+
\frac{\partial\phi }{\partial x}\frac{\partial^2\phi}{\partial t\partial 
x}+\frac{\partial f}{\partial t}\left(\frac{\partial \phi}{\partial t}\right)^2+
\frac{\partial f}{\partial x}\frac{\partial \phi}{\partial x}\frac{\partial 
\phi}{\partial t}.
\end{equation}
If $\frac{\partial \phi}{\partial x}(P)\neq0$, the equation $(\ref{24})$ and the boundary condition yields
\begin{equation}\label{29}
\frac{\partial^2 \phi}{\partial x^2}(P)=p_1\frac{\partial \phi}{\partial t}(P).
\end{equation}
Therefore the equation $(\ref{23})$ can be re-written using the boundary 
condition as 
\begin{equation}\label{30}
\frac{\partial v}{\partial t}(P)=(k_g-p_1)|\nabla 
\phi|^2+p_1\left(\frac{\partial \phi}{\partial x}\right)^2+
\frac{\partial\phi }{\partial x}\frac{\partial^2\phi}{\partial t\partial 
x}+\frac{\partial f}{\partial t}\left(\frac{\partial \phi}{\partial t}\right)^2+
\frac{\partial f}{\partial x}\frac{\partial \phi}{\partial x}\frac{\partial 
\phi}{\partial t}.
\end{equation}
Notice that by $(\ref{24})$ we obtain, using $(\ref{29})$, 
\begin{equation}
 0=\frac{\partial \phi}{\partial t}\frac{\partial^2\phi }{\partial x\partial 
t}+\frac{\partial \phi}
{\partial x}\frac{\partial^2\phi }{\partial x^2}=\frac{\partial \phi}{\partial 
t}\left(\frac{\partial^2\phi }{\partial x\partial 
t}+p_1\frac{\partial \phi}
{\partial x}\right),
\end{equation}
that is,
$$
p_1\frac{\partial \phi}{\partial 
x}=-\frac{\partial^2\phi }{\partial x\partial 
t}.
$$
Thus $(\ref{30})$ becomes
\begin{align*} 
\frac{\partial v}{\partial t}(P)&=(k_g-p_1)|\nabla 
\phi|^2+\frac{\partial 
f}{\partial t}\left(\frac{\partial \phi}{\partial t}\right)^2+
\frac{\partial f}{\partial x}\frac{\partial \phi}{\partial x}\frac{\partial 
\phi}{\partial t}
\end{align*}
and we write
$$
\frac{\partial v}{\partial t}(P)=(k_g-p_1)|\nabla 
\phi|^2+\frac{\partial \phi}{\partial t}\langle \nabla\phi, \nabla f\rangle.
$$
Since $f|_{\partial M}$ is constant, so $\frac{\partial f}{\partial x}(P)=0$, 
and using that $\nu$ coincide with normalized gradient of $f$ in $\partial M$ we have that $\frac{\partial 
f}{\partial t}\leq0$
\begin{align*} 
\frac{\partial v}{\partial t}(P)&=(k_g-p_1)|\nabla 
\phi|^2+\frac{\partial 
f}{\partial t}\left(\frac{\partial \phi}{\partial t}\right)^2\\
&\geq\left(k_g+\frac{\partial 
f}{\partial t}-p_1\right)|\nabla \phi|^2,
\end{align*}
hence
\begin{equation}
(k_g+\frac{\partial 
f}{\partial t}-p_1)|\nabla \phi|^2<0,
\end{equation}
and $p_1>k_g+\frac{\partial 
f}{\partial t}=k_g-f_{\nu}\geq c$.

Now we assume that $\frac{\partial \phi}{\partial x}(P)=0$. A straighforward 
calculation yields 
$$\frac{\partial^2 v}{\partial x^2}(P)=\left(\frac{\partial^2\phi}{\partial 
x\partial t}\right)^2+\frac{\partial \phi}{\partial 
t}\frac{\partial^3\phi}{\partial x^2\partial 
t}+\left(\frac{\partial^2\phi}{\partial 
x^2}\right)^2.$$
Using the boundary condition we get that 
\begin{equation}\label{31}
\frac{\partial^2 v}{\partial 
x^2}(P)=p_1^2\phi\frac{\partial^2\phi}{\partial 
x^2}+\left(\frac{\partial^2\phi}{\partial x^2}\right)^2\leq0.
\end{equation}
Since $\frac{\partial \phi}{\partial x}(P)=0$, the equation $(\ref{23})$ 
implies that
$$\frac{\partial v}{\partial 
t}(P)=k_g\left(\frac{\partial \phi}{\partial 
t}\right)^2+p_1\phi\frac{\partial^2\phi}{\partial 
x^2}+\frac{\partial f}{\partial t}\left(\frac{\partial \phi}{\partial 
t}\right)^2 =\left(k_g+\frac{\partial f}{\partial 
t}\right)p_1^2\phi^2+p_1\phi\frac{\partial^2\phi}{\partial x^2}.$$
Thus 
\begin{equation}\label{32}
\left(k_g+\frac{\partial f}{\partial 
t}\right)p_1^3\phi^2+p_1^2\phi\frac{\partial^2\phi}{\partial x^2}<0.
\end{equation}
Adding inequality $(\ref{31})$ with $(\ref{32})$ we obtain
$$\left(\frac{\partial^2\phi}{\partial 
x^2}+p_1^2\phi\right)^2+p_1^3\left(k_g+\frac{\partial f}{\partial 
t}-p_1\right)\phi^2<0.$$
Hence 
$$p_1>k_g+\frac{\partial f}{\partial t}=k_g-f_{\nu}\geq c.$$

Let's assume that $v$ is the constant function. Observe that $v\notequiv0$ 
because $\phi$ is non-constant. Since $v$ is $f$-harmonic, inequality 
$(\ref{33})$ implies that
$$\Hess\phi=0\,\,\,\,\,\,\text{ and }\,\,\,\,\,\, \Ric_f(\nabla\phi,\nabla\phi)=0,\,\,\,\,\,\text{ on 
}\,\,\,\,M.$$

Now, using that $\Delta_f\phi=0$, we obtain that $\langle \nabla \phi, \nabla f\rangle=0$ and hereby $\Hess f(\nabla\phi, \nabla\phi)=0.$ Thus, the Gaussian curvature $K$ of $M$ vanishes. Moreover, using the structure of surfaces,  \begin{equation}\label{854}\nabla f=\lambda\, J(\nabla\phi),\end{equation} where $J$ is the anti-clockwise rotation of $\pi/2$  in the tangent plane.

Let $\{e_1,e_2\}$ be a local orthonormal frame field such that $e_1$ is tangent 
to $\partial M$ and $e_2=\eta$. So,
\begin{align*}
 0=\Hess\phi(e_1,e_2)&=e_1e_2(\phi)-\nabla_{e_1}e_2(\phi)\\
 &=e_1(p_1\phi)-\langle\nabla_{e_1}e_2,e_1\rangle\phi_1\\
 &=(p_1-k_g)\phi_1.
\end{align*}
Observe that if $\phi_1=0$ on $\partial M$, then $\phi=$constant on $\partial 
M$ and hence $\phi$ is a constant function on $M$ which is a contradiction. 
Thus $p_1=k_g$ except maybe when $\phi_1=0$. Since 
$\Hess\phi(e_1,e_1)=0$ we have
\begin{align*}
 0=\Hess\phi(e_1,e_1)&=e_1e_1(\phi)-\nabla_{e_1}e_1(\phi)\\
 &=e_1(e_1\phi)-\langle\nabla_{e_1}e_1,e_2\rangle e_2(\phi)\\
 &=e_1(e_1\phi)+k_gp_1\phi.
\end{align*}
Hence $\phi$ satisfies on the boundary a second order differential equation 
\begin{align}\label{35}
 \frac{d^2\phi}{dx^2}+k_gp_1\phi&=0\\
 \phi(0)&=\phi(\ell)\nonumber
\end{align}
where $\ell$ represents the length of $\partial M$. The function $\phi$ does 
not vanishes identically, thus $\phi_1=0$ except for a finite number of points. 
Therefore $p_1=k_g$ except for a finite number of points and using the 
continuity of $k_g$, we conclude that $p_1=k_g$ everywhere. Therefore,
$$p_1 = k_g-f_\nu + f_\nu\geq c,$$
and the equality between $p_1$ and $c$ occurs if  $k_g=k_0$ and $f_\nu=0$. Using 
$K=0$ and $k_g$ is a positive constant, we conclude that $M$ is an Euclidean ball.

Furthermore, the identity $(\ref{854})$ and after a straightforward computations we obtain that
$$\Hess f = \dfrac{\Delta f}{2v}\left( J(\nabla\phi)\otimes J(\nabla\phi)\right).$$ It easy to see, 
using that $M$ is an Euclidean ball, that $\phi=x_i$, that is, $\phi$ is a coordinate function.  Thus, using the expression of $\phi$,  $f$ satisfies $\Hess f = 0$ and as $f$ is constant on the boundary, we have $f$  constant.


\qed

\bigskip

\end{document}